\documentclass[11pt]{article}

\usepackage{amsmath, amsfonts, amsthm, graphicx,amssymb}
\usepackage[top=21mm, bottom=22mm, left=21mm, right=21mm]{geometry}
\usepackage{comment}
\usepackage{hyperref}
\hypersetup{
	colorlinks=true,
	linkcolor=blue,
	filecolor=magenta,      
	urlcolor=cyan,
	citecolor=blue
}
\usepackage{authblk}
\usepackage{enumitem}
\usepackage{aliascnt} 
\usepackage[none]{hyphenat}
\usepackage{tikz, tikz-3dplot}
\usetikzlibrary{calc, patterns,intersections, scopes}
\usetikzlibrary{positioning,arrows,shapes,decorations.markings,decorations.pathreplacing, decorations.pathmorphing,matrix,patterns}
\tikzstyle{vtx}=[circle,draw=black,fill=black,inner sep=0,minimum size=5pt,text=white,font=\footnotesize]

\newtheorem{theorem}{Theorem}[section]

\newaliascnt{lemma}{theorem}
\newtheorem{lemma}[lemma]{Lemma}
\aliascntresetthe{lemma}
\newaliascnt{claim}{theorem}
\newtheorem{claim}[claim]{Claim}
\aliascntresetthe{claim}
\newaliascnt{question}{theorem}

\aliascntresetthe{question}

\usepackage{cleveref}
\crefname{lemma}{lemma}{lemmas}
\Crefname{lemma}{Lemma}{Lemmas}
\crefname{claim}{claim}{claims}
\Crefname{claim}{Claim}{Claims}
\crefname{question}{question}{questions}
\Crefname{question}{Question}{Questions}

\theoremstyle{definition}

\newcommand{\cF}{\mathcal{F}}

\title{Cross-free families have linear size}
\author{Istv\'an Tomon}
\affil{Ume\r{a} University}
\affil{\texttt{istvan.tomon@umu.se}}
\date{}

\begin{document}

\maketitle
\sloppy

\begin{abstract}
    Two subsets $A$ and $B$ of a ground set $X$ are \emph{crossing} if all four regions of their Venn diagram are non-empty; a family $\cF\subset 2^X$ is \emph{$k$-cross-free}, if it contains no $k$ pairwise crossing members. Such families arise in multiflow problems, combinatorial geometry, and phylogenetic combinatorics. Nearly fifty years ago, Karzanov and Lomonosov conjectured that every $k$-cross-free family on an $n$-element ground set has size $O(kn)$. We prove the bound $O_k(n)$, settling (arguably) the main problem about the growth rate of such families. 
    
    In fact, we prove the stronger statement that every \emph{$k$-wise laminar} family has at most linear size. Here a family is \(k\)-wise laminar if it contains no antichain of size \(k\) whose members have nonempty common intersection.

    \smallskip
\noindent \textbf{Keywords:} crossing, extremal set theory

\noindent \textbf{MSC 2020:} (primary) 05D05 (secondary) 52C30
\end{abstract}

\noindent
\textbf{Update.} After the publication of this manuscript on arXiv, we were informed  that the main result of the paper (\Cref{thm:main1}) was already proved by Knill \cite{Knill} in 1994. From this, linear bound for the Karzanov-Lomonosov conjecture (\Cref{thm:main0}) follows immediately. This paper was completely overlooked, and for the past three decades, the conjecture was continuously stated to be open \cite{DKM,Fleiner,GKMW,KPT}. The author is grateful to Hongyi Gou, Mostafa Mirabi, Mihir Mittal, Chieu-Minh Tran, and Zhenyu Yang for finding the paper of Knill.

Our paper provides a substantially different proof of Knill's result. We will not publish  the current manuscript in a journal.

\section{Introduction}

Let $X$ be an $n$-element ground set. Two sets $A,B\subset X$ are \emph{crossing} if none of the four sets $A\setminus B, B\setminus A, A\cap B,$ and $X\setminus(A\cup B)$ are empty. Equivalently, $A$ and $B$ are crossing if their Venn diagram has no empty regions. In this paper, we address a long-standing conjecture of Karzanov and Lomonosov \cite{KL} regarding the maximum size of families containing no $k$ pairwise crossing members, referred to as \emph{$k$-cross-free} families.

Note that the definition of crossing is invariant under complementation of its arguments; if $\cF\subset 2^X$ is $k$-cross-free and $A\in \cF$,  one may include $X\setminus A$  without violating the $k$-cross-free property.  A 2-cross-free family is equivalent to a \emph{laminar} family (after accounting for complements), where a family is laminar if any two members are either disjoint, or one contains the other. A classical result of Edmonds and Giles \cite{EG} states that the maximum size of a laminar family is $2n$, implying the bound of $4n-2$ for 2-cross-free families.

The importance of 3-cross-free families stems from the locking theorem of Karzanov and Lomonosov \cite{KL} (1978), which characterizes the families of subsets $\mathcal{F} \subset X$ that can be simultaneously "locked" by a multiflow --- meaning the flow value across each cut $(A, X \setminus A)$ for $A \in \mathcal{F}$ achieves the cut's minimum capacity. They proved that a family is lockable in every graph if and only if it is 3-cross-free, providing a powerful multiflow generalization of the Max-flow min-cut theorem. This motivated the search for a general bound for the size of $k$-cross-free families. Karzanov and Lomonosov \cite{Karzanov,KL}, and Pevzner \cite{Pevzner} conjectured that the maximum size of a $k$-cross-free family on an $n$-element ground set is $O(kn)$. Early progress by Lomonosov provided an elegant $O(kn\log n)$ upper bound, derived from the observation that any element $x \in X$ can be contained in at most $k-1$ sets of a fixed size $\ell < n/2$.  For the case $k=3$, Pevzner \cite{Pevzner} proved a linear bound, and a very short argument for the bound $10n$ was given by Fleiner \cite{Fleiner}. This was subsequently sharpened to the optimal $8n-20$ by Dress, Koolen, and Moulton \cite{DKM}.

The existence of a linear bound for $k \geq 4$ remained open. Gr\"unewald, Koolen, Moulton and Wu \cite{GKMW} proved linearity for specific subclasses of 4-cross-free families, but the general $O(kn \log n)$ bound persisted as the state-of-the-art for decades. In 2019, Kupavskii, Pach, and Tomon \cite{KPT} achieved the first general improvement $O_k(n\log^* n)$, coming tantalizingly close to linearity. In this work, we close the gap.

\begin{theorem}\label{thm:main0}
    Every $k$-cross-free family $\cF\subset 2^{[n]}$ has size at most $O_k(n)$.
\end{theorem}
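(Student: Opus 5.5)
The plan is to derive \Cref{thm:main0} from the stronger statement announced in the abstract: every $k$-wise laminar family $\cF\subset 2^{[n]}$ has size $O_k(n)$. Recall that $\cF$ is $k$-wise laminar if it contains no antichain of size $k$ whose members have a common point. The argument therefore splits into a reduction step and the main extremal bound.

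\emph{Reduction.} Fix an element $x_0\in[n]$. Replace each $A\in\cF$ by whichever of $A$ and $[n]\setminus A$ avoids $x_0$. Crossing is invariant under complementation, and each set has at most two preimages, so we obtain a family $\cF'$ of subsets of $X'=[n]\setminus\{x_0\}$ with $|\cF|\le 2|\cF'|$ that is still $k$-cross-free. Because no member of $\cF'$ contains $x_0$, the condition $X\setminus(A\cup B)\neq\emptyset$ holds automatically. Hence two members of $\cF'$ cross exactly when they are incomparable and intersect. An antichain of $k$ members with a common point would consist of pairwise crossing sets. So $\cF'$ is $k$-wise laminar on $n-1$ points, and it suffices to bound $k$-wise laminar families linearly.

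\emph{Main bound.} I would induct on $k$, with the case $k=2$ being the laminar bound $2n$ of Edmonds and Giles. For each point $x$, let $\cF_x$ be the family of members containing $x$. Since all members of $\cF_x$ share $x$, every antichain in $\cF_x$ has fewer than $k$ elements, so by Dilworth $\cF_x$ is a union of at most $k-1$ chains. A naive double count, $\sum_x|\cF_x|\le (k-1)n\cdot(\text{chain length})$, only gives $O(kn^2)$, or $O(kn\log n)$ after grouping by size as Lomonosov did.

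To obtain a linear bound I would use a charging scheme. Each $A\in\cF$ should be assigned a private witness: an element $x\in A$, or a pair $(x,i)$ with $i\le f(k)$, such that few sets receive the same witness. A natural choice is to take the minimal members, i.e.\ those containing no other member, and charge each of them to a point not covered by other minimal members. The members of the next layer are then charged to elements that belong to them but to no smaller member, up to $k-1$ chains. The plan is to show that removing a suitable bounded-size substructure leaves a $(k-1)$-wise laminar family on a ground set where points have been contracted, so that the recursion $f(n,k)\le C_k n+f(n',k-1)$ closes. One concrete option is to split each point $x$ into $k-1$ copies, one per chain of $\cF_x$. Another is to pass to the atoms of the family restricted to maximal members and compress.

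The main obstacle is this charging/recursion step. Incomparable sets with pairwise but not common intersections can interact across many points, so a single set may have no private element. I expect to handle this by showing that the "private-element-free" sets form a family with bounded VC-type structure, or by a structural lemma. Such a lemma would state that if every member of $\cF$ has at least two elements not covered by its proper subsets in $\cF$, then the Hasse diagram of $\cF$ restricted to each point has bounded depth growth. I would try first to prove that the number of members with no private element relative to their maximal proper subsets is at most $O_k(1)$ times the number of members that do have one. This would yield $|\cF|=O_k(n)$, and therefore \Cref{thm:main0}.
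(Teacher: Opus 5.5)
Your reduction is correct and is exactly \Cref{lemma:wkcf}. Once every set avoids $x_0$, two members cross exactly when they are incomparable and intersecting, so you get a $k$-wise laminar family of at least half the size. But this only reduces \Cref{thm:main0} to \Cref{thm:main1}, whose proof is the entire substance of the paper. Your treatment of that ``main bound'' is not an argument: you yourself leave the charging/recursion step open, and the concrete routes you suggest do not work as stated.

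\emph{Splitting points.} Splitting each point $x$ into $k-1$ copies according to a chain partition of $\cF_x$ does not preserve containments, because the partitions at different points need not be compatible. A pair $A\subsetneq B$ can lie in a common chain at $x$ but in different chains at some $y\in A$. Then the split images of $A$ and $B$ intersect without being nested, so the split family need not be laminar. Making the partitions compatible is precisely the difficulty.

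\emph{Private elements.} Your final proposed step is to bound the members with no private element by $O_k(1)$ times those with one. This would suffice: the members having $x$ as a private element form a fully-intersecting antichain inside $\cF_x$, so at most $(k-1)n$ members have a private element. But it is no easier than the theorem itself. Add all singletons to an arbitrary $k$-wise laminar family $\mathcal{G}$ of sets of size at least $2$. The result is still $k$-wise laminar, only the singletons have private elements, and your claim becomes literally $|\mathcal{G}|=O_k(n)$.

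What is missing is the mechanism the paper uses in the proof of \Cref{thm:main}.
\begin{itemize}
\item Take $\cF$ extremal, i.e.\ maximizing $|\cF|/|X|$ over ground sets of size at most $n$. Deleting a point $x$ from every member preserves $k$-wise laminarity, so for every $x$ there are at least $|\cF|/n$ members $A$ with $A\cup\{x\}\in\cF$.
\item Induction on $k$ enters only to bound the \emph{exceptional} sets, those with two distinct one-point extensions. Discard sets of size at most $k$ and pass to an independent set in the graph joining equal-size sets with symmetric difference $2$. What remains is $(k-1)$-wise laminar, because the smallest member of a fully-intersecting $(k-1)$-antichain could be replaced by its two extensions.
\item The remaining one-point extensions form in-trees of in-degree at most $k$. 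For $c$ large this yields $\alpha n$ disjoint continuous chains of length $h$.
\item After the regularization \textbf{C1}--\textbf{C4}, these chains are assembled into a cross-support tree of height $k$ with branching at least $k+1$. A fully-intersecting antichain of size $k$ is then read off along a suitable root-to-leaf path.
\end{itemize}
Your proposal has no counterpart to this extremality/one-point-extension step. You also give no mechanism by which removing $O_k(n)$ members, or contracting points, turns a general $k$-wise laminar family into a $(k-1)$-wise laminar one. So the recursion $f(n,k)\le C_kn+f(n',k-1)$ you hope to close is unsupported.
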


In their survey of research problems in discrete geometry, Brass, Moser, and Pach \cite{BMP} highlight interesting connections between the Karzanov-Lomonosov conjecture and geometric crossing problems. A fundamental question in the theory of graph drawings is whether there exists a constant $c_k$ such that every \emph{geometric graph} (i.e. graph drawn in the plane with straight line edges) on $n$ vertices with at least $c_k n$ edges contains $k$ pairwise crossing edges, where crossing is meant in the usual geometric sense.

By Euler’s polyhedral formula, a geometric graph with no $2$ crossing edges has at most $3n-6$ edges. The existence of $c_3$ was proved by Agarwal, Aronov, Pach, Pollack, and Sharir \cite{AAPPS} (see also \cite{AT,PRT}), and the existence of $c_4$ was confirmed by Ackerman \cite{Ackerman}. However, for $k\geq 5$, the problem remains open with the best known general bound being $O_k(n \log n)$ due to Valtr \cite{Valtr}.

Although "crossing" in the geometric sense differs from the set-theoretic definition, the two concepts coincide in case the vertices are in convex position. Capoyleas and Pach \cite{CP} proved that a geometric graph on $n$ vertices in convex position with no $k$ pairwise crossing edges has at most $2(k-1)n - \binom{2k-1}{2}$ edges, and this bound is optimal for $n\geq 2k$. This result is equivalent to stating that if $X$ is a cyclically ordered set and $\cF \subset 2^X$ is a $k$-cross-free family of intervals, then $|\cF| \leq 4(k-1)n - 2\binom{2k-1}{2}$. Our main result extends this linearity from the restricted case of intervals to arbitrary families of subsets.

Beyond geometry and optimization, $k$-cross-free families play a structural role in phylogenetic combinatorics, particularly in the study of split systems and circular networks \cite{DHKMS}. 

\medskip

In fact, we prove a strengthening of \Cref{thm:main0}, which might be of independent interest. Say that a family $\mathcal{F}\subset 2^X$ is \emph{$k$-wise laminar}, if for any $k$ distinct sets $A_1,\dots,A_k\in \mathcal{F}$, if $A_1\cap\dots\cap A_k$ is non-empty, then there exist $1\leq i\neq j\leq k$ such that $A_i\subset A_j$. In other words, $\cF$ does not contain an antichain of size $k$ whose elements have a non-empty common intersection. We prove the following theorem, and show that it indeed implies \Cref{thm:main0}.

\begin{theorem}\label{thm:main1}
    Every $k$-wise laminar family $\cF\subset 2^{[n]}$ has size at most $O_k(n)$.
\end{theorem}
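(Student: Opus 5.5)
We argue by induction on $k$, proving $|\cF|\le c_k n$. The base case $k=2$ is the Edmonds--Giles bound: a $2$-wise laminar family is laminar, so $|\cF|\le 2n$. For the inductive step, fix a $k$-wise laminar family $\cF\subset 2^{[n]}$ with $k\ge 3$; we may discard the empty set. Call $A\in\cF$ \emph{minimal-type at $x$} if $x\in A$. For each $x\in[n]$ let $\cF_x=\{A\in\cF: x\in A\}$. Every antichain in $\cF_x$ has a common element $x$, so it has size at most $k-1$. By Dilworth, $\cF_x$ is a union of at most $k-1$ chains. Thus locally the family has bounded width, and the aim is to turn this local structure into a global linear bound.

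The plan is a charging scheme. For each $A\in\cF$, consider the sets of $\cF$ properly containing $A$, and call $B$ a \emph{parent} of $A$ if $B\supsetneq A$ is minimal among these. Any two parents of $A$ are incomparable and share the elements of $A$, so $A$ has at most $k-1$ parents. This gives a Hasse-type forest structure with bounded in-degree upward. We then charge each set $A$ to an element of $A\setminus\bigcup\{C\in\cF: C\subsetneq A\}$ when such a ``private'' element exists; these sets number at most about $(k-1)n$, since each $x$ is the private element of at most $k-1$ sets (these sets form an antichain through $x$). The remaining sets $A$ are covered by their proper subsets in $\cF$. For these we plan to select the maximal proper subsets $C_1,\dots,C_m$ of $A$ that lie in $\cF$. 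If $m$ is large, then many of them must be pairwise disjoint or nested-free. Concretely, intersecting $C_i$'s give antichains through a common point, so the intersection graph of $\{C_i\}$ has clique number (for common points) below $k-1$. Here we want to apply induction. The family $\{C\in\cF : C\subsetneq A\}$ together with $A$ should be $(k-1)$-wise laminar on the ground set $A$ relative to the covering structure: any antichain of $k-1$ maximal children through a point, together with a suitably chosen incomparable set, would give $k$.

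The main obstacle is making the induction close with linear rather than $n\log n$ or $n\log^* n$ growth. Naive recursion over the tree of sets loses a factor per level, which is exactly the barrier in prior work. What I would try first is a weighted potential. Assign each set $A$ the weight $|A|$ minus the total size of its maximal children, and show that the sum of $\min(1,\cdot)$ weights over "branching" nodes is $O_k(n)$. I would also show that non-branching sets (those with a single maximal child covering almost all of them) form chains through each point of length at most $O_k(1)$ before a branching must occur. The second statement is where $k$-wise laminarity must be used essentially: a long chain of sets each obtained by adding elements already covered by other members should produce $k$ incomparable sets sharing an element.

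If that direct approach fails, the fallback is to pass to a shifted or compressed family. This would mean replacing $\cF$ by a family of intervals in a suitable linear or cyclic order that preserves sizes and does not create large antichains with common points. We would then conclude via an interval bound in the style of Capoyleas--Pach, since for intervals the $O(kn)$ bound is known.
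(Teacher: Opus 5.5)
There is a genuine gap: your argument only disposes of the easy sets. The count of sets with a private element is correct (the sets whose private element is $x$ form an antichain through $x$, so there are at most $(k-1)n$ of them). So is the bound of $k-1$ on the number of minimal proper supersets of a nonempty set. But the whole difficulty lies in the sets covered by their proper subsets, and none of your three routes for these works as stated.

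First, the induction hook is false. $\{C\in\cF: C\subsetneq A\}\cup\{A\}$ need not be $(k-1)$-wise laminar: for $k=3$ take $A=\{1,2,3\}$, $C_1=\{1,2\}$, $C_2=\{1,3\}$. Nothing guarantees the ``suitably chosen incomparable set'' that would turn a $(k-1)$-antichain into a $k$-antichain.

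Second, the key lemma of the potential approach fails already for laminar families. Let $B_1=\{y_0\}$, $B_{i+1}=B_i\cup\{y_i\}$, and add all singletons $\{y_i\}$. This family is laminar. Each $B_{i+1}$ ($i\ge 1$) has no private element, and its maximal children are $B_i$ and $\{y_i\}$, with $B_i$ missing a single element. Yet these non-branching sets form a chain of length about $n$ through $y_0$. Long chains of one-element extensions are not forbidden by $k$-wise laminarity; they are exactly the structure a proof has to exploit.

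Third, the fallback has no content. No known compression turns an arbitrary $k$-wise laminar family into a family of intervals while preserving its size and the absence of fully-intersecting $k$-antichains. Producing one would essentially be a proof of the theorem.

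What is missing is (i) a correctly identified $(k-1)$-wise laminar subfamily to which induction applies, and (ii) a global argument that many long chains force a forbidden antichain. Here is how the paper does it.
\begin{enumerate}
\item Take $\cF$ \emph{extremal}, i.e., maximizing $|\cF|/|X|$ over $k$-wise laminar families on ground sets of size at most $n$. Deleting an element $x$ from every set shows that for each $x$ there are at least $|\cF|/n$ pairs $A, A\cup\{x\}\in\cF$.
\item Call a set exceptional if it has two distinct one-element extensions in $\cF$. Discard those of size at most $k$, and pass to an independent set in the graph joining equal-size sets with symmetric difference $2$ (maximum degree at most $k-1$). What remains is $(k-1)$-wise laminar: an antichain of $k-1$ such sets through a point becomes a $k$-antichain when the smallest is replaced by its two extensions, and independence keeps the new sets incomparable to the others. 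This is where induction on $k$ genuinely enters, and it shows there are $O_k(n)$ exceptional sets.
\item The remaining extension edges form in-forests of in-degree at most $k$. Once $|\cF|/n$ is large, this yields $\alpha n$ disjoint continuous chains of $h+1$ sets each.
\item After the regularization \textbf{C1}--\textbf{C4}, one builds a cross-support tree of height $k$ with branching at least $k+1$. Property \textbf{T5} then produces a fully-intersecting antichain of size $k$.
\end{enumerate}
Your proposal has no analogue of the extremality and one-element-extension step or of the tree argument, so as it stands it does not give a linear bound.
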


\noindent
The remainder of the paper is devoted to the proofs of \Cref{thm:main0} and \Cref{thm:main1}.

\section{Preliminaries}

In this section, we collect a few simple results and introduce our notation. We start by recalling some basic terminology about set systems. Let $\cF\subset 2^X$.
\begin{itemize}
    \item  Two sets $A,B\subset X$ are \emph{comparable} if $A\subset B$ or $B\subset A$, otherwise incomparable.
    \item $\cF$ is a \emph{chain} if any two of its members are comparable. Equivalently, $\cF$ is a chain if there is an enumeration $A_1\subset \dots\subset A_s$ of the elements of $\cF$. Also, $\cF$ is a \emph{continuous chain} if $|A_{i+1}|=|A_i|+1$ for $i=1,\dots,s-1$.
    \item $\cF$ is an  \emph{antichain} if any two elements of $\cF$ are incomparable.
    \item $\cF$ is \emph{fully-intersecting} if $\bigcap_{A\in \cF} A$ is non-empty. Also, $\cF$ is \emph{$k$-wise intersecting} if the intersection of any $k$ elements of $\cF$ is non-empty.
\end{itemize}
Using this terminology, a family is $k$-wise laminar if it does not contain a fully-intersecting antichain of size~$k$. The next simple lemma shows that \Cref{thm:main1} indeed implies \Cref{thm:main0}.

\begin{lemma}\label{lemma:wkcf}
Let $\cF\subset 2^X$ be a $k$-cross-free family. Then there exists a $k$-wise laminar family $\cF'\subset 2^X$ of size at least $|\cF|/2$.
\end{lemma}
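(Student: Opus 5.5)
Fix an element $x_0\in X$ and use complementation to move every set into a single "half" of $2^X$. For each $A\in\cF$, let $\phi(A)=A$ if $x_0\notin A$, and $\phi(A)=X\setminus A$ if $x_0\in A$. Every member of $\phi(\cF)$ then avoids $x_0$. The map $\phi$ is at most $2$-to-$1$, since only $A$ and $X\setminus A$ can share an image. Therefore $\cF'=\phi(\cF)$ has size at least $|\cF|/2$.

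Crossing is invariant under complementing either argument. Hence $\cF'$ is still $k$-cross-free: $k$ pairwise crossing members of $\cF'$ would pull back to $k$ pairwise crossing members of $\cF$, because the preimages are distinct sets whose pairwise crossing relations are the same.

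It remains to show that $\cF'$ is $k$-wise laminar. Suppose $A_1,\dots,A_k\in\cF'$ form an antichain with a common element $y\in A_1\cap\dots\cap A_k$. Take any $i\neq j$ and check the four Venn regions of $A_i,A_j$.
\begin{itemize}
\item $A_i\cap A_j\ni y$, so it is non-empty.
\item $A_i\setminus A_j$ and $A_j\setminus A_i$ are non-empty because $A_i,A_j$ are incomparable.
\item $X\setminus(A_i\cup A_j)\ni x_0$, since no member of $\cF'$ contains $x_0$.
\end{itemize}
So $A_i$ and $A_j$ cross for every $i\neq j$, giving $k$ pairwise crossing members of $\cF'$, a contradiction.

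The argument has no serious obstacle. The only point needing care is the injectivity bookkeeping: the preimages of distinct sets in $\cF'$ are distinct, and at most two members of $\cF$ collapse to one set. The case $X=\emptyset$ is trivial.
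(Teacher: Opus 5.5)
Your proof is correct and is essentially the paper's argument. You fix a point, complement so that no set contains it, and observe that for sets all missing a common point, a fully-intersecting antichain is pairwise crossing. The only difference is that you keep the whole image $\phi(\cF)$ and use the at-most-$2$-to-$1$ count, whereas the paper takes the larger of $\cF_0$ and $\{X\setminus A: A\in\cF\setminus\cF_0\}$, which avoids that injectivity bookkeeping.
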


\begin{proof}
Say that $A$ and $B$ are \emph{weakly-crossing} if the three sets $A\setminus B$, $B\setminus A$ and $A\cap B$ are all non-empty. Say that a family is \emph{weakly-$k$-cross-free} if it does not contain $k$ pairwise weakly-crossing sets. Observe that every weakly-$k$-cross-free family is also $k$-wise laminar.

Let $x\in X$ be arbitrary and let $\cF_0=\{A\in \cF:x\not\in A\}$. Note that the union of no two sets in $\cF_0$ is $X$, so $\cF_0$ is weakly-$k$-cross-free. If $|\cF_0|\geq |\cF|/2$,  then $\cF'=\cF_0$ suffices. Otherwise, let $\cF'=\{X\setminus A:A\in \cF\setminus\cF_0\}$. Then $\cF'$ is also $k$-cross-free and no element of $\cF'$ contains $x$, so it is also weakly-$k$-cross-free, and has size at least $|\cF|/2$.
\end{proof}

Dilworth's theorem \cite{Dilworth} states that  any partially ordered set (such as a set-system ordered by the subset relation) can be partitioned into as many chains as the size of the largest antichain. This has the following simple corollary about $k$-wise laminar families. 

\begin{lemma}\label{lemma:dilworth}
Let $\cF\subset 2^X$ be a $k$-wise laminar and $k$-wise intersecting family. Then $\mathcal{F}$ can be partitioned into $k-1$ chains.
\end{lemma}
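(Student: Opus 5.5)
The plan is to apply Dilworth's theorem directly, so the only real task is to bound the size of the largest antichain in $\cF$ by $k-1$. Order $\cF$ by inclusion. By Dilworth's theorem, $\cF$ can be partitioned into as many chains as the size of its largest antichain. Hence it suffices to show that every antichain $\mathcal{A}\subset\cF$ satisfies $|\mathcal{A}|\leq k-1$. If $\cF$ has fewer than $k-1$ members, the statement is trivial: we may use singleton chains, padded with empty chains if one insists on exactly $k-1$ parts.

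To bound the antichain, suppose for contradiction that $\mathcal{A}\subset\cF$ is an antichain with $|\mathcal{A}|\geq k$. Pick any $k$ distinct members $A_1,\dots,A_k\in\mathcal{A}$. Since $\cF$ is $k$-wise intersecting, $A_1\cap\dots\cap A_k\neq\emptyset$. So $\{A_1,\dots,A_k\}$ is a fully-intersecting antichain of size $k$ in $\cF$, which contradicts the assumption that $\cF$ is $k$-wise laminar. (Equivalently, $k$-wise laminarity supplies $i\neq j$ with $A_i\subset A_j$, contradicting that $\mathcal{A}$ is an antichain.) Therefore every antichain has at most $k-1$ elements, and Dilworth's theorem yields a partition of $\cF$ into at most $k-1$ chains.

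There is no genuine obstacle here. The one point to state carefully is the reduction: the $k$-wise intersecting hypothesis is used only on the $k$ chosen sets, so that they are fully intersecting. With that in place, $k$-wise laminarity caps the width of the poset at $k-1$.
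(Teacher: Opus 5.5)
Your proposal is correct and matches the paper's proof: $k$-wise intersecting makes any antichain of size $k$ fully intersecting, which contradicts $k$-wise laminarity. So the width is at most $k-1$, and Dilworth's theorem gives the partition into $k-1$ chains.
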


\begin{proof}
 $\cF$ does not contain an antichain of size $k$, as the $k$-wise intersecting property would imply that it is a fully-intersecting antichain of size $k$. But then Dilworth's theorem \cite{Dilworth} implies that $\cF$ is the union of $k-1$ chains.
\end{proof}

\begin{lemma}\label{lemma:uniform}
Let $\ell\geq 1$, and let $\cF\subset 2^X$ be a $k$-wise laminar family such that $|A|=\ell$ for every $A\in \cF$. Then 
$$|\cF|\leq \frac{(k-1)|X|}{\ell}.$$ 
\end{lemma}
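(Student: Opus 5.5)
The plan is a double count of incidences. Since all members of $\cF$ are distinct and have the same size $\ell$, no two of them are comparable: $A\subset B$ with $|A|=|B|$ forces $A=B$. Hence $\cF$ is itself an antichain. Because $\cF$ is $k$-wise laminar, it therefore contains no fully-intersecting subfamily of size $k$.

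Next, fix an element $x\in X$ and let $\cF_x=\{A\in \cF: x\in A\}$. Every $k$ members of $\cF_x$ share the element $x$, so they would form a fully-intersecting antichain of size $k$. This is impossible, so $|\cF_x|\leq k-1$ for every $x\in X$. (Equivalently, $\cF_x$ is $k$-wise intersecting and $k$-wise laminar, so by \Cref{lemma:dilworth} it is a union of $k-1$ chains. Each chain in an antichain has at most one element, which gives the same bound.)

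Finally, count the pairs $(x,A)$ with $x\in A\in\cF$ in two ways:
$$\ell|\cF|=\sum_{A\in\cF}|A|=\sum_{x\in X}|\cF_x|\leq (k-1)|X|.$$
Dividing by $\ell\geq 1$ gives the claimed bound.

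I do not expect a real obstacle. The only point to state explicitly is that uniformity turns $\cF$ into an antichain, so that local $k$-wise laminarity at each point caps the degree of every element at $k-1$.
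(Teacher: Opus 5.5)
Your proof is correct and is essentially the paper's argument. The paper phrases it as averaging (some $x$ lies in at least $|\cF|\ell/|X|$ members, and these members form a fully-intersecting antichain), while you phrase it as an explicit double count of incidences with the per-element bound $|\cF_x|\leq k-1$; these are the same idea.
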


\begin{proof}
By simple averaging, some $x\in X$ is contained in at least $|\cF|\ell/|X|$ elements of $\cF$. As no two $\ell$-element sets are comparable, we must have $|\cF|\ell/|X|\leq k-1$.
\end{proof}

Finally, we make repeated use of the following formulation of Tur\'an's theorem \cite{turan} from extremal graph theory.

\begin{lemma}\label{lemma:turan}
Every $n$ vertex graph of average degree $d$ contains an independent set of size at least $\frac{n}{d+1}$.
\end{lemma}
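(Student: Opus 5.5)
The plan is to deduce Lemma~\ref{lemma:turan} from the Caro--Wei bound. Let $G$ be a graph on $n$ vertices with degrees $d(v)$ and average degree $d = \frac{1}{n}\sum_v d(v)$. I would show that $G$ has an independent set of size at least $\sum_v \frac{1}{d(v)+1}$, and then bound this sum from below by Jensen's inequality.

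\textbf{Step 1 (probabilistic greedy).} Choose a uniformly random ordering $\sigma$ of $V(G)$. Let $I$ be the set of vertices $v$ that precede all of their neighbours in $\sigma$. The set $I$ is independent: if $u,v\in I$ were adjacent, each would have to come before the other, which is impossible. A vertex $v$ lies in $I$ exactly when it is first among the $d(v)+1$ vertices of its closed neighbourhood. By symmetry this happens with probability $\frac{1}{d(v)+1}$. Linearity of expectation then gives
\[
\mathbb{E}|I| = \sum_{v} \frac{1}{d(v)+1},
\]
so some ordering produces an independent set at least this large.

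\textbf{Step 2 (convexity).} The function $t\mapsto \frac{1}{t+1}$ is convex on $[0,\infty)$. By Jensen's inequality, equivalently the Cauchy--Schwarz or AM--HM inequality,
\[
\sum_v \frac{1}{d(v)+1} \ \ge\ n\cdot \frac{1}{\frac1n\sum_v d(v) + 1} \ =\ \frac{n}{d+1}.
\]
This proves the lemma.

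No step here is a real obstacle. The only point needing care is that the bound uses the average degree rather than the maximum degree, which is exactly why the convexity step is needed. An alternative route, if one prefers to avoid probability, is to run the greedy algorithm that repeatedly takes a minimum-degree vertex and deletes its closed neighbourhood, then track the number of deleted edges to get the same bound. I would present the probabilistic argument since it is shortest.
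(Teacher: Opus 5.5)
Your proof is correct: the random-ordering argument gives the Caro--Wei bound $\sum_v \frac{1}{d(v)+1}$, and convexity of $t\mapsto 1/(t+1)$ turns this into $n/(d+1)$.

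The paper does not prove this lemma; it quotes it as a reformulation of Tur\'an's theorem. The route implied by that citation passes to the complement. Let $r=\alpha(G)$. Then $\overline{G}$ is $K_{r+1}$-free, so by Tur\'an it has at most $(1-\frac1r)\frac{n^2}{2}$ edges. Hence $G$ has at least $\frac{n^2}{2r}-\frac n2$ edges, i.e.\ $d\ge \frac nr-1$, which gives $r\ge \frac{n}{d+1}$.

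Your argument is self-contained and gives the slightly stronger degree-sequence bound, at the cost of a probabilistic step. The Tur\'an route relies on a heavier theorem but avoids the convexity step.

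Every application in the paper only needs the maximum-degree version. These are the graph $Q$ with maximum degree at most $k-1$ and the auxiliary graph on $I_2$ with maximum degree at most $4h^2$. For those, the plain greedy algorithm already suffices.
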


\section{\texorpdfstring{$k$}{k}-wise laminar families}

In this section, we prove \Cref{thm:main1}, which then immediately implies \Cref{thm:main0} as well following \Cref{lemma:wkcf}.

\begin{theorem}\label{thm:main}
    For every $k$, there exists $c_k>0$ such that the following holds. Let $X$ be a set of size $n$, and let $\cF\subset 2^X$ be a $k$-wise laminar family. Then $|\cF|\leq c_kn$.
\end{theorem}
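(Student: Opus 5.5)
We would argue by induction on $k$. The base case $k=2$ is the Edmonds--Giles bound: a $2$-wise laminar family is laminar, so $|\cF|\le 2n$. For the induction step we would look for a \emph{density increment in $k$}. The goal is to show that a $k$-wise laminar family $\cF$ with $|\cF|\ge c_k n$ contains a subfamily $\cF_1$ with $|\cF_1|\ge |\cF|/C_k$ which, after modifying the ground set, becomes $(k-1)$-wise laminar on a ground set of size $O_k(n)$. If the constants are bounded this way, induction closes.

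The structural input is local. For each $x\in X$, the star $\cF_x=\{A\in\cF: x\in A\}$ contains no antichain of size $k$, so by Dilworth it splits into $k-1$ chains, exactly as in \Cref{lemma:dilworth}. Lomonosov's averaging (\Cref{lemma:uniform}) shows that the only obstruction to linearity is long chains spread over many size scales. We would therefore sort $\cF$ by size into dyadic classes $\cF_i=\{A:2^{i}\le|A|<2^{i+1}\}$. Within one class, a point $x$ lies in at most $(k-1)$ chains, each containing sets of comparable size.

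The plan is to ``compress'' chains. Inside each class we pass, via \Cref{lemma:turan}, to a large subfamily in which any two comparable sets differ by at least a constant fraction of their size. The conflict graph here joins $A\subset B$ when $|B\setminus A|$ is small, and its degrees are bounded by $O_k(1)$ using the Dilworth decomposition of stars.

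We would then contract, for every set $A$, a ``private'' part. Take a maximal member $B$ of some Dilworth chain through $A$'s points, and map each $A$ to a new element representing the region $B\setminus A$. The aim is to charge each $A$ either to elements of a new ground set whose total size is $O_k(n)$ (summing a geometric series across scales), or to a set that still participates in a fully-intersecting antichain of size $k-1$ in an auxiliary family. In the latter case, any fully-intersecting antichain of size $k-1$ in the auxiliary family should lift, together with the chain structure we removed, to a fully-intersecting antichain of size $k$ in $\cF$. That would show the auxiliary family is $(k-1)$-wise laminar.

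The main obstacle is this lifting/charging step: making the removal of one ``chain direction'' genuinely drop the laminarity parameter from $k$ to $k-1$, while keeping the new ground set linear in $n$ rather than $n\log n$. Cross-scale interactions are exactly what caused the $\log n$ and $\log^* n$ losses in earlier work. Our first attempt would be to do the charging greedily from the largest sets down. We would assign each set the elements in which it differs from its immediate successor in its chain, so that every ground element is used $O_k(1)$ times across all scales, and we would use \Cref{lemma:turan} repeatedly to discard conflicting sets at the cost of constant factors only.
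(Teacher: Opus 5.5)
\textbf{There is a genuine gap.} The two steps that would carry your induction are, respectively, false and unspecified.

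First, the compression step fails. Dilworth does give only $k-1$ chains among the supersets of $A$. But each chain can contain $\Theta(\epsilon|A|)$ sets $B\supset A$ with $|B\setminus A|<\epsilon|A|$, so the conflict graph has unbounded degree. Concretely, the chain $\{1\}\subset\{1,2\}\subset\dots\subset[n]$ is $k$-wise laminar. Yet any subfamily in which comparable members of the same dyadic class differ by an $\epsilon$-fraction keeps only $O(\epsilon^{-1}\log n)$ of its $n$ members. Worse, these are exactly the sets you cannot afford to discard. \Cref{lemma:chain_extraction} shows that a dense extremal family must contain $\alpha n$ disjoint \emph{continuous} chains, whose consecutive members differ in a single element, and the paper builds its contradiction from them.

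Second, the reduction ``a constant fraction becomes $(k-1)$-wise laminar on a ground set of size $O_k(n)$'' has no content until the re-encoding is fixed. Sending each $A\in\cF$ to the singleton $\{A\}$ on the ground set $\cF$ already produces a laminar family. So all the difficulty sits in bounding the new ground set, which is your claim that each element is charged $O_k(1)$ times across all scales. You assert this without argument. \Cref{lemma:uniform} controls one size at a time, which is exactly why it yields only $O_k(n\log n)$. Likewise, you give no mechanism for lifting a fully-intersecting $(k-1)$-antichain of the auxiliary family to a $k$-antichain of $\cF$; you flag this yourself as the main obstacle.

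The missing ideas are these. Take $\cF$ extremal, i.e.\ maximizing $|\cF|/|X|$. Then deleting any point $x$ must merge at least $|\cF|/n$ pairs $A,\,A\cup\{x\}\in\cF$. The induction hypothesis is used only for \emph{exceptional} sets $A$, those having two one-point extensions $A\cup\{x\},A\cup\{y\}\in\cF$. After discarding small sets and a Tur\'an step ensuring that equal-size members differ in at least four elements, a fully-intersecting $(k-1)$-antichain of exceptional sets lifts to a $k$-antichain. You replace its smallest member $A$ with $A\cup\{x\}$ and $A\cup\{y\}$. This is the concrete lifting your plan lacks, and it is available only for these sets.

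The non-exceptional sets form in-forests of indegree at most $k$ under $A\to A\cup\{x\}$, which yields many disjoint long continuous chains. After regularizing them (\textbf{C1}--\textbf{C4}), the paper builds a cross-support tree of height $k$ with more than $k$ children at each internal vertex. It reads off a fully-intersecting $k$-antichain directly, with no appeal to the induction hypothesis. Nothing in your proposal plays the role of extremality or of this direct construction, so the induction does not close.
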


We prove \Cref{thm:main} by induction on $k$. In case $k=2$, $\cF$ is a laminar family, and we can prove that $c_2=2$ suffices by an elementary inductive argument. In the rest of the proof, we assume that $k\geq 3$ and that $c_{k-1}$ exists. The proof consists of two main parts. First, we show that if $\cF$ is extremal with respect to being $k$-wise laminar, and $\cF$ has $cn$ elements for some sufficiently large constant $c$, then we can find a dense collection of long, continuous chains. Second, we build a tree encoding certain special comparabilities between the chains. We argue that if we are able to build a sufficiently large such tree, then we can extract $k$ sets violating the $k$-wise laminar condition, reaching a contradiction.

\medskip

Say that $\cF\subset 2^X$ is \emph{extremal}, if $\cF$ is $k$-wise laminar, and $\cF$ maximizes the quantity $|\cF'|/|X'|$ among all $k$-wise laminar families $\cF'\subset 2^{X'}$ with $|X'|\leq |X|$. Clearly, in order to prove \Cref{thm:main}, it is enough to only consider extremal families. Let $\cF\subset 2^X$ be an extremal family, and assume that $|\cF|\geq cn$ for some sufficiently large constant $c>0$ specified later. Our goal is to arrive at a contradiction. First, we show that $\cF$ contains a dense collection of long continuous chains.

\begin{lemma}\label{lemma:chain_extraction}
Let $h$ and $\alpha$ be positive integers. If $c$ is sufficiently large with respect to $h$ and $\alpha$, then $\cF$ contains a collection $\{C_i\}_{i\in I}$ of pairwise disjoint continuous chains such that $|I|\geq \alpha n$ and $|C_i|=h+1$ for $i\in I$.
\end{lemma}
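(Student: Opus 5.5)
The plan is to get continuous chains from the extremality of $\cF$ by \emph{deleting ground elements}. For $S\subset X$ set $\cF_S=\{A\setminus S: A\in\cF\}\subset 2^{X\setminus S}$. I first check that $\cF_S$ is again $k$-wise laminar. If $A_1\setminus S,\dots,A_k\setminus S$ are distinct, pairwise incomparable and share an element $y$, then $A_1,\dots,A_k$ are pairwise incomparable, because $A_i\subset A_j$ would give $A_i\setminus S\subset A_j\setminus S$. They also all contain $y$, which is a contradiction. Write $r=|\cF|/n\geq c$. Extremality then gives $|\cF_S|\leq r(n-|S|)$, so deleting $S$ causes at least $r|S|$ collisions among members of $\cF$.

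For $S=\{x\}$ a collision is exactly a pair $A,A\cup\{x\}\in\cF$, i.e.\ a continuous chain of length $2$ labelled by $x$. So for each $x$ there are at least $r$ such pairs, and at least $rn=|\cF|$ in total.

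Let $G$ be the graph on $\cF$ whose edges are these covering pairs $A\subset B$ with $|B|=|A|+1$. Next I bound the degrees of $G$.
\begin{itemize}
\item The up-neighbours $A\cup\{x\}$ of a nonempty $A$ form a fully-intersecting antichain. Hence $A$ has at most $k-1$ of them.
\item If $|A|\geq k$, the down-neighbours $A\setminus\{x\}$ of $A$ form an antichain with nonempty common intersection. Hence there are again at most $k-1$ of them.
\end{itemize}
Sets of size less than $k$ number only $O_k(n)$ by \Cref{lemma:uniform} summed over $\ell<k$. Since $c$ is large, I discard them. So $G$ has maximum degree at most $2k-2$ and at least roughly $|\cF|$ edges.

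To lengthen the chains, I would apply extremality with larger deleted sets. Take $S$ to be a uniformly random $h$-element set, or process a random ordering of $X$ in blocks of $h$ elements. Every collision class of $\cF_S$ consists of sets agreeing outside $S$. If that common part $T$ is nonempty, the class is a fully-intersecting $k$-wise laminar family. By \Cref{lemma:dilworth} (every $k$-subfamily of it is intersecting) it splits into $k-1$ chains, and each of these chains lives inside $T\cup S$. A chain of $m$ sets differing only inside the $h$-set $S$ can be refined to consecutive size differences of $1$ only if the intermediate sets belong to $\cF$. This is where the degree bound on $G$ and the per-element count $p_x\geq r$ have to be combined. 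I want the averaged count to show that for a typical random $S$, $\Omega(rh/k)$ of the collisions come from classes containing a full continuous chain $A_0\subset\dots\subset A_h$ with $A_h\setminus A_0=S$.

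Given at least $\alpha' n$ continuous chains of length $h+1$ (with $\alpha'$ growing with $c$), I make them pairwise disjoint using the conflict graph. Its vertices are the chains, and two chains are adjacent when they share a set. Each set of $\cF$ lies in at most $(2k-2)^{h}$ continuous chains of length $h+1$, because $G$ has bounded degree. Hence the conflict graph has bounded average degree, and \Cref{lemma:turan} yields $\alpha n$ pairwise disjoint chains once $c$ is large in terms of $h$, $\alpha$ and $k$.

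The main obstacle is the middle step: upgrading many short covering edges (or many collisions in $\cF_S$) into many \emph{long} continuous chains. I would first try an iterated version of the single-element argument. Delete elements one at a time and track how each class of collisions grows into a path in $G$, using that the ratio bound holds for every $\cF_S$ simultaneously. If that fails, I would try choosing $S$ greedily along existing paths of $G$ to extend them.
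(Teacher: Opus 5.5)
\textbf{There is a genuine gap: the middle step, turning many covering pairs into many long continuous chains, is never carried out, and the proposed mechanism cannot supply it.} Your first and last steps are fine and match the paper. Deleting one element $x$ and using extremality gives at least $|\cF|/n$ pairs $A,A\cup\{x\}\in\cF$. The covering graph has bounded degree away from small sets; for down-neighbours you need $|A|\geq k+1$, not $|A|\geq k$, but this is harmless once sets of size less than $k$ are discarded. A conflict-graph argument with \Cref{lemma:turan} then makes chains disjoint. The problems with the middle step are as follows.

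\emph{Deleting an $h$-set carries no information about long chains.} For $|S|=h$, the inequality $|\cF|-|\cF_S|\geq r|S|$ is already met by $r|S|$ pairwise disjoint covering pairs $A,A\cup\{x\}$ with $x\in S$. So it cannot tell long chains from isolated pairs.

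\emph{Averaging over a random $S$ fails as well.} A continuous chain $A_0\subset\dots\subset A_h$ has only one label set $A_h\setminus A_0$, and there are only $O_{h,k}(|\cF|)$ such chains. A collision class has at most $2^h$ members. Hence the expected number of collisions of the kind you want is $O_{h,k}(|\cF|/\binom{n}{h})$. This is far below $rh/k=h|\cF|/(kn)$ when $h\geq 2$ and $n$ is large.

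\emph{Bounded degree and $e(G)\geq|V(G)|$ do not force long paths.} Take the ground set $\mathbb{Z}_t$ with $t\geq 5$ and the sets $\{i,i+1\}$ and $\{i,i+1,i+2\}$. This family is $4$-wise laminar. Every element labels exactly two covering pairs, which is exactly the $|\cF|/n=2$ per element your count guarantees. The covering graph is a single $2t$-cycle zigzagging between two levels, and there is no chain of three sets. So some further, non-local use of $k$-wise laminarity is needed to rule out such zigzags at scale. The fallback ideas in your last paragraph do not provide it.

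The missing idea, used in the paper, is to show that \emph{exceptional} sets number only $O_k(n)$, using the induction hypothesis on $k$. These are the sets $A\in\cF$ with two up-neighbours $A\cup\{x\},A\cup\{y\}\in\cF$.
\begin{enumerate}
\item Discard sets of size at most $k$ (\Cref{lemma:uniform}).
\item Pass to an independent set in the graph joining equal-size sets with $|A\triangle B|=2$. This graph has maximum degree at most $k-1$, so only a factor $k$ is lost.
\item The remaining exceptional sets form a $(k-1)$-wise laminar family. Indeed, in a fully-intersecting antichain $A_1,\dots,A_{k-1}$ of them, replace a smallest member $A_{k-1}$ by $A_{k-1}\cup\{x\}$ and $A_{k-1}\cup\{y\}$. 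This gives a fully-intersecting antichain of size $k$ in $\cF$.
\item Hence there are at most $kc_{k-1}n+k^2n$ exceptional sets.
\end{enumerate}
Deleting them and $\varnothing$ removes only $O_k(n)$ covering edges. The remaining directed covering graph has out-degree at most $1$ and in-degree at most $k$, so it is a forest of in-trees with $O_k(n)$ components. Such an in-tree with no directed path on $h+1$ vertices has at most $1+k+\dots+k^{h-1}$ vertices. Therefore a maximal family of vertex-disjoint directed $(h+1)$-vertex paths has size at least $\alpha n$ once $c$ is large in terms of $h,\alpha,k$ and $c_{k-1}$. These paths are exactly the desired disjoint continuous chains.
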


\begin{proof}
Let $x\in X$, and consider the family $\cF_x=\{A\setminus \{x\}:A\in \cF\}\subset 2^{X\setminus\{x\}}$. Observe that $\cF_x$ is also $k$-wise laminar. Therefore, as $\cF$ is extremal, we have
$$\frac{|\cF|}{n}\geq \frac{|\cF_x|}{n-1},$$
which implies $|\cF_x|\leq |\cF|-|\cF|/n$. Therefore, there are at least $|\cF|/n$ sets $A\in \cF$ such that $x\not\in A$ and $A\cup\{x\}\in \cF$ as well.

Define the auxiliary directed graph $H$ on vertex set $\cF$, where there is an edge pointing from $A$ to $B$ if $B=A\cup\{x\}$ for some $x\in X\setminus A$. The previous argument implies that $H$ has at least $n\cdot (|\cF|/n)=|\cF|$ edges.

Say that a set $A\in \cF$ is \emph{exceptional} if there exist $x,y\in X\setminus A$, $x\neq y$, such that $A\cup\{x\},A\cup\{y\}\in \cF$. Equivalently, $A$ is exceptional if the outdegree of $A$ in $H$ is at least 2. Let $\cF'$ be the family of exceptional sets. 
\begin{claim}
    $|\cF'|\leq kc_{k-1}n+k^2n$.
\end{claim}

\begin{proof}
Remove all members of $\cF'$ of size at most $k$, and let $\cF'_1$ be the resulting family. By \Cref{lemma:uniform}, we have 
$$|\cF'_1|\geq |\cF'|-1-\sum_{\ell=1}^k\frac{(k-1)n}{\ell}\geq |\cF'|-k^2n.$$ Let $Q$ be the graph on vertex set $\cF'_1$ in which $A,B\in \cF'_1$ are joined by an edge if $|A|=|B|$ and $|A\Delta B|=2$. Observe that if $B_1,\dots,B_k$ are neighbors of $A$ in $Q$, then $B_1\cap\dots\cap B_k\neq \emptyset$, so $\{B_1,\dots,B_k\}$ is a fully-intersecting antichain violating the $k$-wise laminar condition. Therefore, the maximum degree of $Q$ is at most $k-1$. Let $\cF'_2\subset \cF'_1$ be an independent set of $Q$ of size $|\cF'_2|\geq |\cF'_1|/k$, whose existence is guaranteed by Tur\'an's theorem (\Cref{lemma:turan}).

The main observation is that $\cF'_2$ is $(k-1)$-wise laminar. Indeed, assume to the contrary that $\{A_1,\dots,A_{k-1}\}\subset \cF'_2$ is a fully-intersecting antichain of size $k-1$. Assume that $|A_{k-1}|$ is minimal among $|A_1|,\dots,|A_{k-1}|$.  As $A_{k-1}$ is exceptional, there exist $x,y\in X\setminus A_{k-1}$, $x\neq y$, such that $A_{k-1}\cup\{x\},A_{k-1}\cup\{y\}\in \cF$. But then $$\{A_1,\dots,A_{k-2},A_{k-1}\cup\{x\},A_{k-1}\cup\{y\}\}$$ is a fully-intersecting antichain of size $k$, which  is fairly straightforward to check. Indeed, the intersection of these $k$ sets is non-empty, as it contains the intersection of $A_1,\dots,A_{k-1}.$  Moreover, there is no comparability between these sets. In order to see this, it is enough to show that $A_{k-1}\cup\{x\}$ and $A_i$ are incomparable for $i\in [k-2]$: we cannot have $A_{k-1}\cup\{x\}\subset A_i$ as $A_{k-1}\not\subset A_i$; and we cannot have $A_{i}\subset A_{k-1}\cup\{x\}$ as either $|A_i|>|A_{k-1}|$, or $|A_i|=|A_{k-1}|$ and $|A_i\Delta A_{k-1}|\geq 4$. 

In conclusion, $\cF'_2$ is $(k-1)$-wise laminar, so $|\cF'_2|\leq c_{k-1}n$. But then $$|\cF'|\leq |\cF'_1|+k^2n\leq k|\cF'_2|+k^2n\leq kc_{k-1}n+k^2n.$$
\end{proof}

Let $\cF_1=\cF\setminus\cF'$ and let $H_1$ be the subgraph of $H$ spanned by $\cF_1$. Observe that every vertex of $H$, with the possible exception of the empty set, has at most $2k$ neighbors. This is true as the outneighbors of any non-empty set form a fully-intersecting antichain, and any $k$ inneighbors of a set of size at least $k+1$ form a fully-intersecting antichain (while a set of size at most $k$ has at most $k$ inneighbors as well). Therefore, $H_1$ has at least 
$$e(H)-2k|\cF'|-n\geq |\cF|-dn$$
edges for some constant $d=d(k)$, where the $-n$ term accounts for the possible removal of the empty set. In $H_1$, the outdegree of every vertex is at most 1, and the indegree of every vertex is at most $k$. The former implies (after also noting that $H$ contains no directed cycles) that every connected component of $H_1$ is a directed rooted tree, whose edges are directed towards the root.

Let $\{C_i\}_{i\in I}$ be a maximum-size collection of pairwise disjoint continuous chains in $\cF_1$, each $C_i$ of size $h+1$. Each such chain $C_i$ is a directed path of length $h$ in $H_1$. Let $\cF_2=\cF_1\setminus\bigcup_{i\in I}C_i$, and let $H_2$ be the subgraph of $H_1$ induced by $\cF_2$. Then the number of edges of $H_2$ is at least 
$$e(H_2)\geq e(H_1)-\sum_{i\in I}(k+1)|C_i|\geq |\cF|-dn-(k+1)(h+1)|I|.$$
 By the maximality of $\{C_i\}_{i\in I}$, $H_2$ does not contain a directed path of size $h+1$. But each connected component of $H_2$ is a directed rooted tree, whose edges are directed towards the root, and each vertex has indegree at most $k$. If such a tree does not contain a directed path of size $h+1$, its size is at most $1+k+\dots+k^{h-1}=:K$. This implies that the total number of edges of $H_2$ is at most 
$$e(H_2)\leq \frac{K-1}{K}|\cF_2|\leq \frac{K-1}{K}|\cF|.$$ Comparing the lower and upper bounds on the number of edges of $H_2$, we get
$$|\cF|-dn-(k+1)(h+1)|I|\leq \frac{K-1}{K}|\cF|,$$
which implies
$$|I|\geq \frac{|\cF|-Kdn}{(k+1)(h+1)K}\geq \frac{c-Kd}{(k+1)(h+1)K}n.$$
Set $c:=\alpha(k+1)(h+1)K+Kd$. Then $c$ only depends on $k,h,\alpha$, we have $|I|\geq \alpha n$, and thus the collection $\{C_i\}_{i\in I}$ satisfies the required properties.
\end{proof}

Let $h=(2k)^{2k^2+1}$ and $\alpha=32(h+2)!k^{h+2}$, and let $c$ be the constant guaranteed by \Cref{lemma:chain_extraction}. Let $\{C_i\}_{i\in  I_0}$ be a collection of pairwise disjoint continuous chains in $\cF$, each of size $h+1$. For every $i\in I_0$, the members of $C_i$ can be written as 
$$A\subset A\cup\{x_1\}\subset A\cup\{x_1,x_2\}\subset\dots\subset A\cup\{x_1,\dots,x_h\}$$ with a suitable set $A$ and elements $x_1,\dots,x_h\in X\setminus A$. Define 
$$X_i:=\{x_1,\dots,x_h\}.$$ Moreover, for $x\in X_i$, let $C_i(x)$ denote the largest set in $C_i$ not containing $x$. Then $C_i(x),C_i(x)\cup\{x\}\in C_i$.

Next, we prove that we can select a dense subcollection of these chains with several additional properties. In particular, we want to find $I\subset I_0$ such that the following conditions are satisfied.
\begin{description}
    \item[C1] If $i,j\in I$ and $x\in X_i\cap X_j$, then $C_i(x)$ and $C_j(x)$ are comparable.
    \item[C2] There is a total ordering $\prec$ of $X$ such that if $i\in I$ and $x,y\in X_i$ with $x\prec y$, then $C_i(x)\subset C_i(y)$.
    \item[C3] If $i,j\in I$ and $X_i\cap X_j\neq \varnothing$, then either every member of $C_i$ has smaller size than every member of $C_j$, or the other way around.
    \item[C4] If $i\in I$, then every member of $C_i$ has size at least $3k^2h$.
\end{description}

\begin{lemma}
    There exists $I\subset I_0$ of size $|I|\geq (kh+1)n$ such that \textbf{C1}-\textbf{C4} are satisfied.
\end{lemma}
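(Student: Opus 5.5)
The plan is to get \textbf{C4} by a deterministic deletion, to get \textbf{C1} and \textbf{C2} from a single random experiment, and to get \textbf{C3} with Tur\'an's theorem (\Cref{lemma:turan}). Properties \textbf{C1}, \textbf{C2} and \textbf{C4} are inherited by subcollections, so finishing with \textbf{C3} costs nothing. We start from $\{C_i\}_{i\in I_0}$ with $|I_0|\geq \alpha n$, where $\alpha=32(h+2)!k^{h+2}$.

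\emph{Step 1 (\textbf{C4}).} Discard every $i\in I_0$ for which $C_i$ has a member of size less than $3k^2h$. Such a chain contains its smallest member, which has size $\ell<3k^2h$. The chains are pairwise disjoint, so each discarded chain uses a different such set. By \Cref{lemma:uniform} there are at most $(k-1)n/\ell$ sets of size $\ell\geq 1$, plus possibly the empty set. Hence at most $1+(k-1)n\sum_{\ell<3k^2h}1/\ell=O(k\log(kh))\,n$ chains are discarded, which is negligible compared with $\alpha n$. Call the remaining index set $I_1$, so $|I_1|\geq \alpha n/2$.

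\emph{Step 2 (\textbf{C1}, \textbf{C2}).} Fix $x\in X$ and consider the sets $C_j(x)\cup\{x\}$ for $j\in I_1$ with $x\in X_j$. These are distinct members of $\cF$, since the chains are disjoint. They all contain $x$, so the family is fully intersecting, and being $k$-wise laminar it has no antichain of size $k$. By Dilworth's theorem (as in \Cref{lemma:dilworth}) it splits into $k-1$ chains $\mathcal{D}_{x,1},\dots,\mathcal{D}_{x,k-1}$. Let $\lambda_j(x)\in[k-1]$ be the index of the chain containing $C_j(x)\cup\{x\}$.

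Now choose independently a uniformly random total order $\prec$ of $X$ and a uniformly random label $r(x)\in[k-1]$ for each $x\in X$. Keep $j$ if two conditions hold:
\begin{itemize}
\item $\prec$ restricted to $X_j$ agrees with the chain order $x_1,\dots,x_h$ of $C_j$; this has probability $1/h!$.
\item $r(x)=\lambda_j(x)$ for all $x\in X_j$; this has probability $(k-1)^{-h}$.
\end{itemize}
So in expectation at least $|I_1|/(h!\,k^h)\geq 16(h+1)(h+2)k^2 n$ chains survive; fix an outcome achieving this and call the surviving set $I_2$.

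\textbf{C2} holds by construction. For \textbf{C1}, take $x\in X_i\cap X_j$ with $i,j\in I_2$. Then $C_i(x)\cup\{x\}$ and $C_j(x)\cup\{x\}$ lie in the same Dilworth chain $\mathcal{D}_{x,r(x)}$, so they are comparable. Since $x\notin C_i(x),C_j(x)$, the sets $C_i(x)$ and $C_j(x)$ are then comparable too.

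\emph{Step 3 (\textbf{C3}).} Define a graph on $I_2$ where $i\sim j$ if $X_i\cap X_j\neq\varnothing$ and the size ranges of $C_i$ and $C_j$ overlap, i.e.\ the two intervals of length $h$ intersect. We bound the degree of a fixed $i$ as follows.
\begin{itemize}
\item Fix $x\in X_i$. Every neighbor $j$ using $x$ yields a set $C_j(x)\cup\{x\}\ni x$ whose size lies in a window of $O(h)$ values around the sizes in $C_i$.
\item Sets of equal size containing $x$ form a fully-intersecting antichain, so there are at most $k-1$ of each size. Hence at most $O(kh)$ neighbors $j$ use the element $x$.
\item Summing over the $h$ choices of $x\in X_i$ gives degree at most roughly $3kh^2$.
\end{itemize}
\Cref{lemma:turan} then gives an independent set $I$ of size at least $|I_2|/(3kh^2+1)$, which exceeds $(kh+1)n$ for the chosen $\alpha$ (this is where the factor $32$ and the extra $(h+1)(h+2)k^2$ are used).

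The main obstacle is \textbf{C1}. A naive conflict graph, joining $i$ and $j$ when some $C_i(x)$, $C_j(x)$ are incomparable, can have unbounded degree, because one set may be incomparable to an arbitrarily long chain. The random Dilworth-label trick is what sidesteps this, and its cost $k^{-h}$ is exactly what the $k^{h+2}$ factor in $\alpha$ is meant to absorb. I would check carefully that the labels are well defined per pair $(j,x)$ via the disjointness of the chains, and that the window in Step 3 is correctly sized.
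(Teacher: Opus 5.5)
Your structure is sound and differs from the paper only in harmless ways. You enforce \textbf{C4} first rather than last, and you merge the Dilworth-label and random-order experiments into one. In the \textbf{C3} conflict graph you bound the degree by counting at most $k-1$ sets of each size through $x$, getting $3kh^2$, where the paper uses \textbf{C1} to get $4h^2$.

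\textbf{The gap is in the final count.} You replace the true survival probability $1/(h!(k-1)^h)$ by $1/(h!k^h)$, and this discards the factor $(k/(k-1))^h$. Without that factor the numbers do not close. From $|I_2|\geq 16(h+1)(h+2)k^2n$ and maximum degree $3kh^2$, Tur\'an's theorem only gives
$$|I|\geq \frac{16(h+1)(h+2)k^2}{3kh^2+1}\,n,$$
which is at most $\frac{32}{3}kn$ once $h\geq 4$. The lemma requires $(kh+1)n$ with $h=(2k)^{2k^2+1}$, so you fall short by a factor of order $h$. The factor $(h+1)(h+2)k^2$ in $\alpha$ only pays for the Tur\'an loss of order $kh^2$; nothing in your accounting pays for the extra factor $h$ in the target. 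So your claim that the independent set exceeds $(kh+1)n$ ``for the chosen $\alpha$'' is false as written. Your remark that the $k^{h+2}$ in $\alpha$ is exactly what absorbs the labelling cost also misidentifies where the slack comes from.

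\textbf{The fix is to keep the exact cost $(k-1)^{-h}$, as the paper does.} Then $|I_2|\geq 16(h+1)(h+2)k^2(k/(k-1))^h n$. Using $3kh^2+1\leq 4kh^2$ and $(h+1)(h+2)\geq h^2$, you get $|I|\geq 4k(k/(k-1))^h n\geq 4k\cdot 2^{h/(k-1)}n$. This is far larger than $(kh+1)n$ because $h$ is enormous compared with $k$. The surplus $(k/(k-1))^h\geq 2^{h/(k-1)}$ is exactly what the paper's final inequality relies on as well.
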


\begin{proof}
    For $x\in X$, let 
    $$S_x=\{C_i(x)\cup\{x\}:i\in I_0,x\in X_i\}.$$ Then $S_x$ is a $k$-wise laminar  and fully-intersecting family. By \Cref{lemma:dilworth}, $S_x$ can be partitioned into $k-1$ chains. Let $D_x$ be one of these chains, chosen randomly from the uniform distribution on the $k-1$ chains. Let $I_1$ be the set of indices $i\in I_0$ such that for every $x\in X_i$, the set $C_i(x)\cup\{x\}$ is in the selected chain $D_x$. Then each $i\in I_0$ is included in $I_1$ with probability $(k-1)^{-h}$, so $\mathbb{E} |I_1|=|I_0|(k-1)^{-h}$. Fix a choice of chains $D_x$ such that $|I_1|\geq \mathbb{E} |I_1|$, and observe that $I_1$ satisfies \textbf{C1}.

    Let $\prec$ be a random ordering of $X$, chosen from the uniform distribution on all $n!$ orderings. Let $I_2$ be the set of indices $i\in I_1$ such that for every $x,y\in X_i$, if $x\prec y$, then $C_i(x)\subset C_i(y)$. Then for every $i\in I_1$, we have $i\in I_2$ with probability $1/h!$. Therefore,  $\mathbb{E}|I_2|=|I_1|/h!$. Fix an ordering $\prec$ such that $|I_2|\geq \mathbb{E}|I_2|$. Then $I_2$ satisfies \textbf{C2}.

    Let $H$ be the auxiliary graph on $I_2$ in which $i$ and $j$ are joined by an edge if $X_i\cap X_j\neq \varnothing$, and some member of $C_i$ has the same size as some member of $C_j$. Then every $i\in I_2$ has degree at most $4h^2$. Indeed, for every $x\in X_i$, the sets $\{C_j(x):j\in I_2,x\in X_j\}$ form a chain, so there are at most $4h$ indices $j\in I_2$ with $x\in X_j$ and $||C_j(x)|-|C_i(x)||\leq 2h$. But if $||C_j(x)|-|C_i(x)||>2h$, then no member of $C_i$ can have the same size as a member of $C_j$. Let $I_3$ be a maximum-size independent set of $H$, then $I_3$ satisfies \textbf{C3} as well. By Tur\'an's theorem, $|I_3|\geq |I_2|/(4h^2+1)$.
    
    Finally, remove all indices $i\in I_3$ such that the minimum of $C_i$ has size less than $3k^2h$, and let $I$ be the resulting set. By \Cref{lemma:uniform}, we have $$|I|\geq |I_3|-1-\sum_{\ell=1}^{3k^2h}(k-1) n/\ell\geq |I_3|-3k^3hn.$$
    Therefore, $$|I|\geq \frac{|I_0|}{(4h^2+1)h!(k-1)^h}-3k^3hn\geq (kh+1)n.$$
 \end{proof}

In the rest of the proof of \Cref{thm:main}, we fix $I\subset I_0$ of size $|I|\geq (kh+1)n$ satisfying \textbf{C1}-\textbf{C4}. With the help of these chains, we define a tree encoding comparabilities between the chains, with a number of other advantageous properties.

We introduce some notation and terminology related to trees. Given a rooted tree $T$, we imagine $T$ being drawn in the plane without crossing edges such that the root is at the bottom, and vertices of the same \emph{depth} (i.e. distance from the root) lie on a horizontal line (see \Cref{fig:1}). Their order on this line bears significance. A vertex in $T$ is \emph{leftmost} if it is the leftmost vertex on its horizontal line.  Given a vertex $v\in V(T)$, we write $T[v]$ for the subtree spanned by $v$ and its descendants. Finally, $T$ is \emph{perfect} if every root-to-leaf path has the same length, which is called the \emph{height} of the tree.

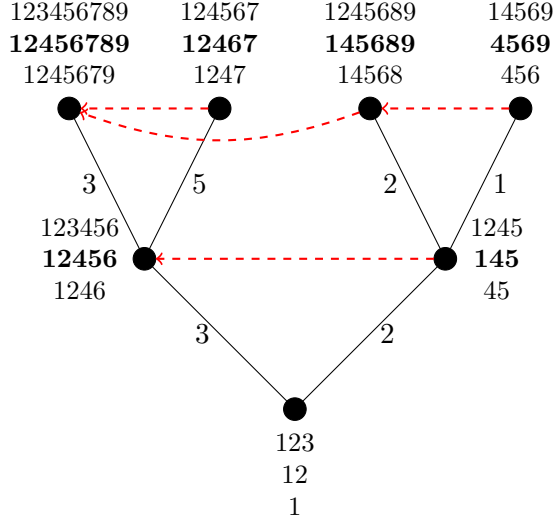
\begin{figure}
\begin{center}
\begin{tikzpicture}[
    % 1. Define the dot style (don't put this in 'nodes={}')
    dot/.style={
        circle,
        draw=black,
        fill=black,
        minimum size=3mm,
        inner sep=0pt,
        outer sep=0pt
    },
    % 2. Define how all labels and edge nodes should look (no circles!)
    every label/.style={
        align=center,
        inner sep=5pt,
        font=\small
    },
    every edge from parent node/.style={
        align=center,
        font=\footnotesize,
        inner sep=10pt
    },
    level distance=2cm,
    level 1/.style={sibling distance=4cm},
    level 2/.style={sibling distance=2cm},
    grow=up
]

% The Tree Structure
\node[dot, label=below:{123\\12\\1}] {}
    child {
        node[dot, label=right:{1245\\\textbf{145}\\45}] (a) {}
        child {node[dot,label=above:{14569\\\textbf{4569}\\456}] (b) {} edge from parent node[right] {1}}
        child {node[dot,label=above:{1245689\\\textbf{145689}\\14568}] (c) {} edge from parent node[left] {2}}
        % Edge label for the left branch
        edge from parent node[right] {2}
    }
    child {
        node[dot, label=left:{123456\\\textbf{12456}\\1246}] (e) {}
        child {node[dot,label=above:{124567\\\textbf{12467}\\1247}] (f) {} edge from parent node[right] {5}}
        child {node[dot,label=above:{123456789\\\textbf{12456789}\\1245679}] (g) {}edge from parent node[left] {3}}
        % Edge label for the right branch
        edge from parent node[left] {3}
    };
   \draw[red, dashed, thick, ->] (b) -- (c) ;
   \draw[red, dashed, thick, ->] (a) -- (e) ;
   \draw[red, dashed, thick, ->] (f) -- (g) ;
    \draw[red, dashed, thick, ->] (c) to[bend left=20] (g) ;
\end{tikzpicture}
\caption{An example of a cross-support tree. Each node represents a chain of size 3. We use the shorthand 123 for $\{1,2,3\}$. The sets $S_v$ are written in bold, and the red edges highlight the containment between the sets $S_v$ required by \textbf{T5}.}
\label{fig:1}
\end{center}
\end{figure}

The following object is the heart of the proof of \Cref{thm:main}. A \textbf{cross-support tree} is a perfect rooted tree $T$ with the following properties:
\begin{description}
    \item[T1] Every vertex $v\in V(T)$ is an element of $I$, and each edge $uv\in E(T)$ receives a label $\rho(uv)\in X$. (Formally, each vertex $v$ is labeled with an element of $I$, so we allow different vertices to have the same label. To simplify notation, we view $v$ as an element of $I$, which should not lead to any confusion.)
    \item[T2] If $v\in V(T)$, the label of every edge with endpoint $v$ is an element of $X_v$. If $u_1,\dots,u_d$ are the children of $v$ from left-to-right, then $\rho(vu_1)\succ\dots\succ\rho(vu_d)$.
    \item[T3] If $v$ is not a root or a leaf, the parent of $v$ is $p$ and its leftmost child is $u_1$, then $\rho(pv)=\rho(vu_1)$.  If $v$ is not a root and has parent $p$, define $$\phi(v):=\rho(pv)\quad\text{and}\quad S_v:=C_v(\phi(v)).$$
     \item[T4] Let $v\in V(T)$ be a non-leaf and let $u$ be a child of $v$. If $x=\rho(vu)$, then $C_{v}(x)\subsetneq C_{u}(x)$. 
    \item[T5] Let $u,u'\in V(T)$ be distinct vertices at the same depth such that $u$ is to the left of $u'$. Let $a$ be their common ancestor of maximum depth, and let $v\neq v'$ be the children of $a$, where $v$ is the ancestor of $u$ (or $v=u$), and $v'$ is the ancestor of $u'$ (or $v'=u'$). If $u$ is a leftmost vertex in $T[v]$, then $S_{u'}\subseteq S_u$.
\end{description}
See \Cref{fig:1} for an example of a cross-support tree. Let $T$ be a cross-support tree with root $\mathbf{r}$ and height at most $k$. Then we can deduce the following additional properties of $T$.

\begin{description}
    \item[T6] Let $u,v\in V(T)$ such that $v\neq \mathbf{r}$ and $u$ is a leftmost vertex in $T[v]$. Then $\phi(u)=\phi(v)$ and if $p$ is the parent of $v$, then $$C_p(\phi(v))\subsetneq S_v\subseteq S_u,$$
    with the second containment strict if $v\neq u$.
    \item[T7] The family $\bigcup_{u\in V(T)}C_{u}$ is $k$-wise intersecting.
    \item[T8] If $v\neq \mathbf{r}$ is an ancestor of $u$, then $|S_v|<|S_u|$.
    \item[T9] After removing some children of $\mathbf{r}$ together with all their descendants, the resulting tree is also a cross-support tree.
\end{description}

\begin{lemma}\label{lemma:T}
$T$ satisfies \textbf{T6}-\textbf{T9}. Moreover, \textbf{T6}-\textbf{T8} do not use \textbf{T5}.
\end{lemma}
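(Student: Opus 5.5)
We verify \textbf{T6}--\textbf{T9} one at a time. The tools are the local axioms \textbf{T2}--\textbf{T4} together with \textbf{C3}, \textbf{C4}, and the fact that each $C_u$ is a continuous chain of $h+1$ sets. \textbf{T5} is used nowhere in \textbf{T6}--\textbf{T8}; it appears only when checking that \textbf{T9} preserves it.

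\textbf{T6.} Let $v=v_0,v_1,\dots,v_m=u$ be the path where each $v_{j+1}$ is the leftmost child of $v_j$. This is the right path because $u$ is leftmost in $T[v]$. For $j\ge 0$, each $v_j$ other than $v_m$ is neither the root (as $v\neq\mathbf{r}$) nor a leaf, so \textbf{T3} gives $\phi(v_{j+1})=\rho(v_jv_{j+1})=\rho(p_jv_j)=\phi(v_j)$, where $p_j$ is the parent of $v_j$. Hence $\phi$ equals a common value $x$ along the path. Applying \textbf{T4} to the edge $pv$ gives $C_p(x)\subsetneq C_v(x)=S_v$. Applying it to each edge $v_jv_{j+1}$ gives $S_{v_j}=C_{v_j}(x)\subsetneq C_{v_{j+1}}(x)=S_{v_{j+1}}$. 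Chaining these yields $C_p(\phi(v))\subsetneq S_v\subseteq S_u$, with strict inclusion when $m\ge1$.

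\textbf{T8.} It suffices to treat a parent--child pair $v,w$ with $v\neq\mathbf{r}$ and then induct along the path. Put $x=\phi(w)=\rho(vw)$. By \textbf{T2}, $x\in X_v\cap X_w$, so \textbf{C3} applies: every member of one of $C_v,C_w$ is smaller than every member of the other. By \textbf{T4}, $C_v(x)\subsetneq C_w(x)$, so the chain $C_w$ must be the larger one. Since $S_v\in C_v$ and $S_w\in C_w$, we get $|S_v|<|S_w|$.

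\textbf{T7.} This is the only step needing a quantitative estimate. Write $m(C)$ for the minimal member of a chain $C$. The plan is to show that every $m(C_u)$ misses only a few elements of $m(C_{\mathbf r})$. Take an edge $pu$ with $x=\rho(pu)$. Then
$$m(C_p)\subseteq C_p(x)\subsetneq C_u(x)\subseteq m(C_u)\cup X_u,$$
where the last inclusion holds because $C_u$ is continuous and built from $m(C_u)$ by adding the elements of $X_u$. Hence $m(C_u)\supseteq m(C_p)\setminus X_u$. Iterating along the root-to-$u$ path, which has at most $k$ edges, gives $|m(C_{\mathbf r})\setminus m(C_u)|\le kh$. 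Now take any $k$ members of $\bigcup_u C_u$. Each contains the corresponding $m(C_u)$, so their common intersection contains all of $m(C_{\mathbf r})$ except at most $k^2h$ elements. By \textbf{C4}, $|m(C_{\mathbf r})|\ge 3k^2h>k^2h$, so this intersection is nonempty.

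\textbf{T9.} Deleting whole subtrees of children of $\mathbf{r}$ leaves a perfect tree of the same height, provided at least one child remains. \textbf{T1}--\textbf{T4} are local conditions on a vertex, its parent and its children. Every surviving non-root vertex keeps all its children, and the root's relative order of children is unchanged, so these conditions persist. For \textbf{T5}, take surviving $u,u'$. Their deepest common ancestor $a$ and the children $v,v'$ of $a$ are the same as before. If $a=\mathbf{r}$, then $v$ and $v'$ are surviving children of the root, whose subtrees are intact. In every case $T[v]$ is unchanged, so "$u$ is leftmost in $T[v]$" means the same thing in both trees, and $S_u,S_{u'}$ are unchanged. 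Thus \textbf{T5} transfers directly.
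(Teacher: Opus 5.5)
Your proposal is correct and follows the paper's proof essentially step by step: the same \textbf{T3}/\textbf{T4} leftmost-path argument for \textbf{T6}, \textbf{C3} combined with \textbf{T4} on a parent--child pair for \textbf{T8}, and the observation that $T[v]$ is unchanged for every non-root $v$ for \textbf{T9}. The only cosmetic difference is in \textbf{T7}. There you propagate $m(C_u)\supseteq m(C_p)\setminus X_u$ along the root path, while the paper inducts on $|A\cap B|\geq |B|-dh$ using $|C_u(x)\Delta A|\leq h$; both give the same total loss of at most $k^2h$ against $|B|\geq 3k^2h$.
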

\begin{proof}
\ 
\begin{description}
    \item[T6:] Let $v=v_0,\dots,v_q=u$ be a path from $v$ to $u$. Then $v_i$ is the leftmost child of $v_{i-1}$ for $i=1,\dots,q$, so $\phi(v_i)=\phi(v_{i-1})$ by \textbf{T3}. But then writing $x=\phi(v)=\phi(v_i)$, we also have $S_{v_i}=C_{v_i}(x)$, so $S_v=S_{v_0}\subsetneq\dots \subsetneq S_{v_q}=S_u$ follows from \textbf{T4}. Moreover, $\rho(pv)=\phi(v)=x$, so $C_p(x)\subset C_v(x)=S_v$ also follows from \textbf{T4}.
    \item[T7:] Let $B$ be the smallest member of $C_{\mathbf{r}}$. We prove by induction that if $u$ is at depth $d$ and $A\in C_u$, then $|A\cap B|\geq |B|-dh$. As $|B|\geq 3k^2h$ by \textbf{C4} and the height of $T$ is at most $k$, this implies that for any choice of $A_1,\dots,A_k\in \bigcup_{u\in V(T)} C_u$, we have
    $$|A_1\cap\dots\cap A_k\cap B|\geq |B|-k^2h>0,$$ which then implies \textbf{T7}. The statement holds for $d=0$, so assume that $d\geq 1$. Let $p$ be the parent of $u$, then $C_p(x)\subset C_u(x)$ for $x=\rho(pu)$ by \textbf{T4}. By our induction hypothesis, $|C_p(x)\cap B|\geq |B|-(d-1)h$, so $|C_u(x)\cap B|\geq |B|-(d-1)h$ as well. But $|C_u(x)\Delta A|\leq h$, so $|A\cap B|\geq |B|-dh$.
    \item[T8:] It is enough to prove the statement in case $v$ is the parent of $u$, as then the claim follows by induction. But this follows from \textbf{C3}, as $X_u\cap X_v$ contains $x=\rho(uv)$, and $C_v(x)\subsetneq C_u(x)$ by \textbf{T4}.
    \item[T9:] It is clear that the resulting tree $T'$ satisfies \textbf{T1}-\textbf{T4}. In order to see that \textbf{T5} is also true, note that for any $v\in V(T')$ with $v\neq \mathbf{r}$, we have $T[v]=T'[v]$.
\end{description}
\end{proof}

The following lemma is the first key result about cross-support trees, showing that they contain large fully-intersecting antichains.

\begin{lemma}\label{lemma:kcross}
If there exists a cross-support tree $T$ of height $k$ where every non-leaf vertex has at least $k+1$ children, then $\mathcal{F}$ contains a fully-intersecting antichain of size $k$.
\end{lemma}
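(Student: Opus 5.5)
The plan is to reduce the statement to Dilworth's theorem and then use the tree structure to find $k$ sets that no partition into $k-1$ chains can accommodate. Suppose, for contradiction, that $\cF$ has no fully-intersecting antichain of size $k$. By \textbf{T7}, the family $\mathcal{G}=\bigcup_{u\in V(T)}C_u$ is $k$-wise intersecting. It is also $k$-wise laminar as a subfamily of $\cF$. So \Cref{lemma:dilworth} partitions $\mathcal{G}$ into $k-1$ chains $D_1,\dots,D_{k-1}$. It then suffices to exhibit $k$ pairwise incomparable members of $\mathcal{G}$, which would contradict this partition by pigeonhole. The natural candidates are the sets $S_u=C_u(\phi(u))$ and $S_u\cup\{\phi(u)\}$, both of which lie in $C_u\subset\mathcal{G}$.

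The incomparability would come from the labels. For a non-leaf $v$ with children $u_1,\dots,u_d$ and labels $x_j=\rho(vu_j)$, where $x_1\succ\dots\succ x_d$ by \textbf{T2}, the following facts hold.
\begin{itemize}
\item By \textbf{C2}, $C_v(x_d)\subset\dots\subset C_v(x_1)$, and each $x_j$ lies outside $C_v(x_j)$ but inside $C_v(x_i)$ for $i<j$.
\item By \textbf{T4}, $C_v(x_j)\subsetneq S_{u_j}$ and $x_j\notin S_{u_j}$.
\item By \textbf{T8} and \textbf{C3}, sizes strictly increase going up the tree.
\end{itemize}
So $S_{u_j}$ misses $x_j$, while a set further up in a branch with larger label (containing $C_v(x_i)\ni x_j$) contains $x_j$. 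This rules out one direction of containment. The other direction should be excluded either by size (\textbf{T8}) or because the branch-$j$ set misses some $x_i$ that the other set contains. \textbf{T5} records exactly the containments $S_{u'}\subseteq S_u$ that do hold between leftmost vertices and vertices to their right. I would use it to control which pairs can be comparable, and hence to choose a set avoiding all of them.

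I would carry this out by induction on the height, with the following statement. If $T$ is a cross-support tree of height $t\le k$ in which every non-leaf has at least $k+1$ children, then for any assignment of the members of $\mathcal{G}$ to $k-1$ chains, some root-to-leaf structure yields $t$ pairwise incomparable sets among the $S_u$'s, each lying in a distinct chain $D_i$. The inductive step goes as follows.
\begin{itemize}
\item Among the $k+1$ children of the root, at most $k-1$ can be blocked by the chains already used, because each chain $D_i$ meets $\{S_{u_j}\}$ in a set whose members are pairwise comparable and hence very constrained by the label structure above.
\item So there are two good children. Restrict to one of them, using \textbf{T9} to prune the tree while keeping it a cross-support tree, and apply the induction hypothesis inside that subtree.
\item Finally, add one further set from the leftmost path (using \textbf{T6}), chosen to be incomparable with all those sets.
\end{itemize}
At height $k$ this gives $k$ pairwise incomparable sets in $\mathcal{G}$, which are fully intersecting by \textbf{T7}. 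That is the contradiction.

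The main obstacle is the incomparability check across different branches and depths. Sets at different depths have different sizes by \textbf{T8}, so one set could still be contained in a larger one higher up. Excluding this requires a witness element: the label $\phi$ of the higher vertex, which the deeper-branch set contains via \textbf{T6} and the monotonicity from \textbf{C2}. I would first try to pick, at each level, the rightmost surviving child, so that its label is $\prec$-smallest. The aim is that every later set in the leftmost chain of a sibling contains it, while the chosen set does not. Should that witness fail, the fallback is \textbf{T5}: for $u'$ to the right of a leftmost $u$ we have $S_{u'}\subseteq S_u$, so the pair $S_u\cup\{\phi(u)\}$ and $S_{u'}$ supplies the needed non-containments in the reverse direction.
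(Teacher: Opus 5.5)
There is a genuine gap. Your proposal never says which $k$ sets form the antichain, and never proves they are pairwise incomparable, which is the whole content of the lemma.

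The Dilworth framing does not help. You must exhibit $k$ pairwise incomparable members of $\mathcal{G}$ anyway, and then \textbf{T7} makes them a fully-intersecting antichain directly, so the chains $D_1,\dots,D_{k-1}$ play no role. The counting you build on them is also unfounded. Let $u_1,\dots,u_d$ be the children of a vertex, from left to right. Applying \textbf{T5} with $u=v=u_i$ and $u'=v'=u_j$ gives $S_{u_d}\subseteq\dots\subseteq S_{u_1}$. So all the sibling sets can lie in a single $D_i$, and nothing limits the ``blocked'' children to $k-1$.

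Your inductive hypothesis also records nothing about which labels were used higher up. So the set you add at the end has no reason to be incomparable with those produced in the subtree. Finally, the witnesses you get from \textbf{C2}, \textbf{T4} and \textbf{T6} only compare a vertex with its children or its leftmost descendants. For two sets two or more levels apart on a path that avoids leftmost children, they do not produce an element missing from the deeper set.

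The missing idea is to take all $k$ sets from one root-to-leaf path, and to apply \textbf{T5} through an auxiliary vertex that is not in the antichain.
\begin{enumerate}
\item Greedily choose $\mathbf{r}=v_0,v_1,\dots,v_k$ so that $v_i$ is not the leftmost child of $v_{i-1}$ and $\phi(v_i)\notin\{\phi(v_1),\dots,\phi(v_{i-1})\}$. This is possible because the at least $k+1$ children carry distinct labels by \textbf{T2}; this is where the hypothesis is used.
\item Put $A_i=S_{v_i}\cup\{\phi(v_i)\}\in C_{v_i}$. By \textbf{T8}, $|A_i|<|A_j|$ for $i<j$, so only $A_i\subseteq A_j$ must be ruled out.
\item Let $w$ be the leftmost vertex of $T[v_i]$ at the depth of $v_j$. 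Since $v_{i+1}$ is not the leftmost child of $v_i$, the deepest common ancestor of $w$ and $v_j$ is $v_i$, and $w$ lies to the left of $v_j$. So \textbf{T5} gives $S_{v_j}\subseteq S_w$.
\item By \textbf{T6}, $\phi(w)=\phi(v_i)$, hence $\phi(v_i)\notin S_w$. Together with $\phi(v_j)\neq\phi(v_i)$, this gives $\phi(v_i)\in A_i\setminus A_j$.
\end{enumerate}

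Your idea of using the $\prec$-smaller label of a non-leftmost child can be completed the same way. If $v_2,\dots,v_k$ are never leftmost children, then $\phi(v_{i+1})\in S_{v_i}$ by \textbf{T2}, \textbf{T3} and \textbf{C2}. The same \textbf{T5} argument inside $T[v_{i+1}]$ gives $\phi(v_{i+1})\notin S_{v_j}$ for all $1\le i<j\le k$, so $\{S_{v_1},\dots,S_{v_k}\}$ is an antichain. Again, the transfer through \textbf{T5} is the essential step.
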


\begin{proof}
Let $\mathbf{r}=v_0,\dots,v_k$ be a root-to-leaf path such that $v_i$ is not the leftmost child of $v_{i-1}$, and $\phi(v_i)$ is distinct from $\phi(v_1),\dots,\phi(v_{i-1})$. We can find such a path greedily by noting that for every $v\in V(T)$, the edges from $v$ to the children of $v$ receive different labels (\textbf{T2}). For $i=1,\dots,k$, let 
$$A_i:=S_{v_i}\cup\{\phi(v_i)\}\in C_{v_i}.$$ We prove that $\{A_1,\dots,A_k\}$ is a fully-intersecting antichain. First, we have
$$(A_1\cap \dots \cap A_k)\supset (S_{v_1}\cap\dots \cap S_{v_k})\neq \varnothing \quad\text{by \textbf{T7}},$$
so $\{A_1,\dots,A_k\}$ is fully-intersecting. If it is not an antichain, then there exist $1\leq i<j\leq k$ such that $A_i$ and $A_j$ are comparable. But
$$|A_i|=|S_{v_i}|+1<|S_{v_j}|+1=|A_j|\hspace{20pt}\mbox{by \textbf{T8}},$$
so this is only possible if $A_i\subset A_j$. We show that this is impossible. Let $w$ be the leftmost vertex in $T[v_i]$ on the same depth as $v_j$. Then $\phi(w)=\phi(v_i)$ by  \textbf{T6}. Let $x=\phi(w)=\phi(v_i)$, and recall that $x\neq \phi(v_j)$. As $v_{i+1}$ is not an ancestor of $w$ (as the ancestor of $w$ is the leftmost child of $v_i$), the common ancestor of $w$ and $v_j$ is $v_i$. Therefore, \textbf{T5} implies that $S_{v_j}\subset S_w$. Crucially, as $x\not\in S_w$, we have $x\not\in S_{v_j}$ and thus $x\not\in S_{v_j}\cup\{\phi(v_j)\}=A_j$. As $x\in A_i$, we cannot have $A_i\subset A_j$.
\end{proof}

 To finish the proof, we construct a cross-support tree of height $k$, whose every non-leaf vertex has at least $k+1$ children. We build such a tree by induction.

 \begin{lemma}
 For $\ell=0,1,\dots,k$, there exists $I_{\ell}\subset I$ of size $|I_{\ell}|\geq |I|-\ell h n$, and for every $i\in I_{\ell}$, there exists a cross-support tree $T_{\ell,i}$ such that 
\begin{itemize}
    \item[(1)] the height of $T_{\ell,i}$ is $\ell$,
    \item[(2)] the root of $T_{\ell,i}$ is $i$,
    \item[(3)] every non-leaf vertex of $T_{\ell,i}$ has at least $h/(2k)^{k\ell}$ children.
\end{itemize}
\end{lemma}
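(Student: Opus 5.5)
The plan is induction on $\ell$, building $T_{\ell+1,i}$ by attaching suitably pruned trees $T_{\ell,j}$ as children of a new root $i$. For $\ell=0$ take $I_0=I$ and let $T_{0,i}$ be the single vertex $i$; \textbf{T1}--\textbf{T5} and (1)--(3) hold vacuously. For the step $\ell\to\ell+1$ the key tool is \textbf{C1}: for each $x\in X$ the family $\{C_j(x): j\in I_\ell,\ x\in X_j\}$ is a chain. By \textbf{C3} its members have distinct sizes, so it is strictly increasing. For $i\in I_\ell$ and $x\in X_i$, let $\sigma_x(i)$ be the successor of $i$ in this chain, i.e.\ the $j$ with $C_j(x)$ minimal subject to $C_i(x)\subsetneq C_j(x)$. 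Then $v=\sigma_x(i)$ is a legal child of $i$ with label $\rho(iv)=x$, and \textbf{T4} holds by construction.

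To satisfy \textbf{T3}, the leftmost edge out of the root $j$ of the attached copy of $T_{\ell,j}$ must also carry label $x$. By \textbf{T2} the root's children are ordered decreasingly in $\prec$, so using \textbf{T9} I will delete all children of $j$ whose labels are $\succ x$. This requires $x$ itself to occur as a label at the root of $T_{\ell,j}$. I would therefore strengthen the induction hypothesis so that the root labels of $T_{\ell,j}$ form a prescribed large subset of $X_j$, ideally of size at least $h/(2k)^{k\ell}$ in every $\prec$-initial segment that is used. The alternative is to pick, for each $x$, a successor among those $j$ whose root label set contains $x$.

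Next comes index loss. An index $i$ is discarded at level $\ell+1$ if too many $x\in X_i$ have no admissible successor. Each $x$ is the top of only boundedly many chains, and more generally of boundedly many failures. Since $\sum_i|X_i|=h|I_\ell|$, counting pairs $(i,x)$ should give at most $hn$ discarded indices, hence $|I_{\ell+1}|\ge|I_\ell|-hn$.

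The main obstacle is \textbf{T5} across different children of the new root, together with the cost of pruning to children with labels $\preceq x$. Suppose $v$ and $v'$ are children of the root with labels $x\succ y$, and $u\in T[v]$ is leftmost, at the same depth as some $u'\in T[v']$. We need $S_{u'}\subseteq S_u$. By \textbf{T6}, $S_u\supseteq S_v\supsetneq C_i(x)$, and by \textbf{C2}, $C_i(y)\subset C_i(x)$. The difficulty is controlling $S_{u'}$ from above, since it lies deep inside $T[v']$.

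First I would try to choose the children greedily in $\prec$-decreasing order of their labels. I would keep a child with label $y$ only if every set $S_{u'}$ in its subtree at depths up to $k$ is contained in the corresponding leftmost $S_u$ of every previously kept child. Violations would be recorded as an auxiliary conflict graph on the $h/(2k)^{k\ell}$ candidate labels. Each $S_{u'}$ is a member of a chain, and by \textbf{T7} everything is $k$-wise intersecting, so Lemma~\ref{lemma:dilworth} and the $k$-wise laminar condition should bound the number of conflicts per candidate by roughly $(2k)^k$. Tur\'an's theorem (Lemma~\ref{lemma:turan}) would then leave an independent set of size at least $h/(2k)^{k(\ell+1)}$ children, which is exactly the loss allowed in (3). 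If this bound on conflicts fails, I would instead extract from the conflicts a fully-intersecting antichain of size $k$ directly, contradicting $k$-wise laminarity.
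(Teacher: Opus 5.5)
There is a genuine gap. The two steps you leave as ``should work'' are where the proof actually lives, and as proposed both fail.

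\textbf{Degree and loss bookkeeping.} Choosing a successor among the $j$ whose root-label set contains $x$ does not control how many children survive when you prune labels $\succ x$. If $x$ is the $\prec$-smallest root label of $T_{\ell,j}$, only one child survives. It also breaks your loss count if the parent $i$ may use any $x\in X_i$. Suppose the $j$ having $x$ as a root label all sit low in the chain $\{C_j(x)\}$. Then every $i$ above them fails for $x$, so failures per $x$ are not bounded.

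Your strengthened hypothesis is the right instinct, but you never say how to realize it. The paper's device is as follows. Let $Y_j$ be the root labels of the current tree $T_{\ell,j}$ and $Z_j$ the $\prec$-upper half of $Y_j$. Use the \emph{same} set $J_x=\{j: x\in Z_j\}$ both for the parent, which only uses labels $x\in Z_i$, and for the candidate children. Then only the top members of each chain $\{C_j(x): j\in J_x\}$ lack a partner, so the loss is at most $hn$ (at most $n$ with your immediate-successor choice). Moreover, since $x\in Z_{j_x}$, pruning the root of $T_{\ell,j_x}$ to labels $\preceq x$ keeps at least $|Y_{j_x}|/2$ children, with leftmost label $x$. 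This gives \textbf{T3} and the degree bound.

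\textbf{The \textbf{T5} step.} Your conflict-graph bound has no basis. Let $i_{d,x}$ be the leftmost depth-$d$ vertex of the subtree hung from label $x$. Lemma~\ref{lemma:dilworth} only covers $\{S_{i_{d,x}}\}$ by $k-1$ chains, and two such chains may be mutually incomparable. So a candidate can conflict with a constant fraction of the others, and Tur\'an yields nothing. Your conflicts also include comparable pairs contained in the wrong direction, which you have no tool to exclude.

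The missing observation is that the direction is \emph{forced}. The steps are:
\begin{enumerate}[label=(\roman*)]
\item Using \textbf{T5} inside each attached subtree, it suffices to compare the leftmost vertices $i_{d,x}, i_{d,y}$ at each depth $d$.
\item Apply Lemma~\ref{lemma:dilworth} (valid by \textbf{T7}) once per depth. This gives $Q\subseteq Z_i$ with $|Q|\ge |Z_i|/(k-1)^{\ell+1}$ on which every $\{S_{i_{d,x}}: x\in Q\}$ is a chain.
\item For $y\prec x$ in $Q$, \textbf{C2} and \textbf{T6} give $y\in C_i(y)\cup\{y\}\subseteq C_i(x)\subsetneq S_{i_{d,x}}$.
\item On the other hand, $\phi(i_{d,y})=y$, so $y\notin S_{i_{d,y}}$.
\item Hence comparability can only mean $S_{i_{d,y}}\subseteq S_{i_{d,x}}$.
\end{enumerate}
You had $C_i(y)\subset C_i(x)\subsetneq S_u$. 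What you lacked is that the single element $y$ decides the direction, which removes any need to control $S_{u'}$ from above.
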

\begin{proof}
We proceed by induction on $\ell$. The case $\ell=0$ follows by taking $I_0=I$, and defining $T_{0,i}$ to be the single vertex tree, whose root is $i$. Now assume that $\ell>0$ and that the statement holds for $\ell-1$ instead of $\ell$.

For every $i\in I_{\ell-1}$, let $Y_i=X_i$ if $\ell=1$, and let $Y_i$ be the set of labels on the edges connecting the root $i$ of $T_{\ell-1,i}$ to its children if $\ell\geq 2$. Then $Y_i\subseteq X_i$ by \textbf{T2} and $|Y_i|\geq h/(2k)^{k(\ell-1)}$. Let $Z_i$ be the second half of $Y_i$ with respect to the ordering $\prec$ of $X$, so $Z_i\subset Y_i$ and $|Z_i|\geq |Y_i|/2$. For each $x\in X$, let 
$$J_x=\{i\in I_{\ell-1}:x\in Z_i\}.$$
By \textbf{C1}, the family $\{C_i(x):i\in J_x\}$ is a chain. Let $J^{\text{top}}_x\subset J_x$ be the set of indices corresponding to the $h$ largest members of this chain, or $J^{\text{top}}_x=J_x$ if $|J_x|<h$. Define 
$$I_{\ell}:=I_{\ell-1}\setminus \bigcup_{x\in X}J_x^{\text{top}},$$
then $|I_{\ell}|\geq |I_{\ell-1}|-hn\geq |I|-h\ell n$.

For every $i\in I_{\ell}$, we define the desired tree $T_{\ell,i}$ as follows. For every $x\in Z_i$, there are at least $h$ indices $j\in I_{\ell-1}$ such that $x\in Z_j$ and $C_i(x)\subset C_j(x)$. This is true because $i\in J_x\cap I_{\ell}$, which means that $|J_x^{\text{top}}|=h$ and every $j\in J_x^{\text{top}}$ works. Therefore, for each $x\in Z_i$, we can pick an index $j_x\in J_x^{\text{top}}$ such that $j_x\neq j_y$ for $x\neq y\in Z_i$. If $\ell=1$, set $T'_x=T_{0,j_x}$. If $\ell\geq 2$, then for each tree $T_{\ell-1,j_x}$, remove all children of the root $j_x$ (and their descendants), whose joining edge is labeled by an element of $X$ that is larger than $x$ in the ordering $\prec$, and let $T'_{x}$ be the resulting tree. By \textbf{T9}, $T'_x$ is also a cross-support tree. Recalling that $x\in Z_{j_x}$, $x$ is in the second half of $Y_{j_x}$ with respect to $\prec$, so the root of $T_{x}'$ has degree at least $h/(2(2k)^{k(\ell-1)})>h/(2k)^{k\ell}$. To summarize, we have the following properties of $T_x'$:
\begin{itemize}
    \item $T_{x}'$ is a cross-support tree with root $j_x$,
    \item if $\ell\geq 2$, the leftmost edge from the root is labeled with  $x$,
    \item $C_i(x)\subset C_{j_x}(x)$,
    \item every non-leaf vertex has at least $h/(2k)^{k\ell}$ children. 
\end{itemize}

Now define the tree $T'$ by taking the disjoint union of the trees $T_{x}'$ for $x\in Z_i$, and joining the root $j_x$ of $T_x'$ to the new root $i$. The trees $T_{x}'$, $x\in Z_i$, are drawn from left to right with respect to the reverse of the order $\prec$ on $X$.  We label the edge $ij_x$ by $x$. Clearly, the highlighted properties of the trees $T_x'$ ensure that $T'$ satisfies \textbf{T1}-\textbf{T4}, which then imply \textbf{T6}-\textbf{T8} by \Cref{lemma:T}. However, \textbf{T5} does not necessarily hold, so we remove some further children of the root of $T'$.

 For $d=0,\dots,\ell-1$, let $i_{d,x}$ be the leftmost vertex of $T'_x$ at depth $d$. The family 
$$\{S_{i_{d,x}}:x\in Z_i\}$$ is $k$-wise intersecting by \textbf{T7}. Therefore, applying \Cref{lemma:dilworth} in a total of $\ell$ times, there exists $Q\subset Z_i$ of size at least 
$$|Q|\geq \frac{|Z_i|}{(k-1)^{\ell}}\geq \frac{|Y_i|}{2(k-1)^{\ell}}\geq \frac{h/(2k)^{(\ell-1) k}}{2(k-1)^{\ell}}>\frac{h}{(2k)^{k\ell}}$$
such that $$D_d=\{S_{i_{d,x}}:x\in Q\}$$ is a chain for every $d=0,\dots,\ell-1$. Let $T=T_{\ell,i}$ be the tree we get by keeping only those subtrees $T'_x$ of $T'$ for which $x\in Q$. 

\begin{claim}
$T$ satisfies \textbf{T5}.
\end{claim}

\begin{proof}
First, we show that if $x,y\in Q$ and $y\prec x$, then $S_{i_{d,y}}\subset S_{i_{d,x}}$. As $D_d$ is a chain, we have that $S_{i_{d,x}}$ and $S_{i_{d,y}}$ are comparable. As $i_{d,x}$ is a leftmost vertex in $T_x'=T[j_x]$, we have $C_{i}(x)\subset S_{j_x}\subset S_{i_{d,x}}$ by \textbf{T6}. But $y\prec x$, so $C_{i}(y)\cup\{y\}\subseteq C_{i}(x)$ by \textbf{C2}, and in particular $y\in C_i(x)$. As $i_{d,y}$ is a leftmost vertex in $T_{y}'$, we have $\phi(i_{d,y})=y$ by \textbf{T6}, so $S_{i_{d,y}}=C_{i_{d,y}}(y)$. In particular, $y\not\in S_{i_{d,y}}$. Thus, as $S_{i_{d,x}}$ and $S_{i_{d,y}}$ are comparable, we must have $S_{i_{d,y}}\subset S_{i_{d,x}}$.

In order to prove that $T$ satisfies \textbf{T5}, we need to verify the following. Let $u,u'\in V(T)$ be distinct vertices at the same depth such that $u$ is to the left of $u'$. Let $a$ be their common ancestor of maximum depth, and let $v\neq v'$ be the children of $a$, where $v$ is the ancestor of $u$ (or $v=u$), and $v'$ is the ancestor of $u'$ (or $v'=u'$). If $u$ is a leftmost vertex in $T[v]$, then $S_{u'}\subseteq S_u$.

If the common ancestor $a$ is not the root $i$, then this is true using that $u$ and $u'$ lie in some cross-support subtree $T'_x$, $x\in Q$. Hence, assume that $a=i$, then $v=j_x$ and $v'=j_y$ for some $x,y\in Q$, $y\prec x$. Also, $u$ is a leftmost vertex in $T_x'$, so $u=i_{d,x}$ for some $d$. We have $S_{u'}\subset S_{i_{d,y}}$ using that $u',i_{d,y}$ are vertices of the cross-support tree $T_y'$. But then $S_{u'}\subset S_{i_{d,y}}\subset S_{i_{d,x}}=S_u$, verifying the required claim.
\end{proof}

In conclusion, $|I_{\ell}|\geq |I|-\ell h n$, and for every $i\in I_{\ell}$, the tree $T=T_{\ell,i}$ is a cross-support tree of height $\ell$ in which every non-leaf vertex has at least $h/(2k)^{k\ell}$ children. This finishes the proof.
\end{proof}

Setting $\ell=k$, and noting that $I_k$ is nonempty, there exists a cross-support tree $T$ of height $k$ in which every non-leaf vertex has at least $h/(2k)^{k^2}\geq k+1$ children. By \Cref{lemma:kcross}, $\cF$ contains a fully-intersecting antichain of size $k$, contradiction. Thus, we must have $|\cF|\leq cn$, finishing the proof of \Cref{thm:main}.

\section*{Acknowledgments}

IT acknowledges the support of the Swedish Research Council grant VR 2023-03375.

\end{document}